\newtheorem{theorem}{Theorem}[section]
\newtheorem{lemma}[theorem]{Lemma}
\newtheorem{proposition}[theorem]{Proposition}
\theoremstyle{definition}
\newtheorem{definition}[theorem]{Definition}
\theoremstyle{remark}
\newcommand{\ra}{\rightarrow}
\newcommand{\ul} {\underline}
\newcommand{\bl} {\begin{lemma}}
\newcommand{\el} {\end{lemma}}
\newcommand{\bt} {\begin{theorem}}
\newcommand{\et} {\end{theorem}} 
\newcommand{\Ckk} {\mathcal C _{k+1}}
\newcommand{\Tkk} {\mathcal T _{k+1}}
\newcommand{\mc}{\mathcal}
\newcommand{\limn}{\lim_{n \ra \infty}}
\newcommand{\limk}{\lim_{k \ra \infty}}
\newcommand{\Tk}{\mathcal T _k}
\newcommand{\Ck}{\mathcal C _k}
\newcommand  {\IX}{\widehat X}
\newcommand {\IR}{\mathbb R}
\newcommand {\IN}{\mathbb N}
\newcommand{\htop} {h_{top}}
\newcommand{\bp}{\begin{proof}}
\newcommand{\ep}{\end{proof}}
\newcommand {\be}{\begin{equation}}
\newcommand  {\ee} {\end{equation}}
\newcommand  {\beq} {\begin{eqnarray*}}
\newcommand  {\eeq} {\end{eqnarray*}}
\newcommand  {\bd} {\begin{definition}}
\newcommand  {\ed} {\end{definition}}
\newcommand  {\xr} {X_\rho}
\begin{document}

\title[The Irregular Set has Full Topological Pressure]{The Irregular Set for Maps with the Specification Property has Full Topological Pressure}
\author{Dan Thompson, University of Warwick}
\begin{abstract}
Let $(X,d)$ be a compact metric space, $f:X \mapsto X$ be a continuous map with the specification property, and $\varphi: X \mapsto \IR$ a continuous function. We consider the set of points for which the Birkhoff average of $\varphi$ does not exist (which we call the irregular set for $\varphi$) and show that this set is either empty or carries full topological pressure (in the sense of Pesin and Pitskel). We formulate various equivalent natural conditions on $\varphi$ that completely describe when the latter situation holds and give examples of interesting systems to which our results apply but were not previously known. As an application, we show that for a suspension flow over a continuous map with specification, the irregular set carries full topological entropy.
\end{abstract}
\maketitle
For a compact metric space $(X, d)$, a continuous map $f:X \mapsto X$ and a continuous function $\varphi: X \mapsto \IR$, we define the irregular set for $\varphi$ to be
\be \label{ir}
\widehat X_\varphi := \left \{ x \in X : \lim_{n \ra \infty} \frac{1}{n} \sum_{i = 0}^{n-1} \varphi (f^i (x)) \mbox{ does not exist } \right \}.
\ee
The irregular set arises naturally in the context of multifractal analysis, where one decomposes a space X into the disjoint union
\[
X = \bigcup_{\alpha \in \IR } X_{\varphi, \alpha} \cup \widehat X_\varphi,
\]
where $X_{\varphi, \alpha}$ is the set of points for which the Birkhoff average of $\varphi$ is equal to $\alpha$. We begin a program to understand the topological pressure of the multifractal decomposition 
by focusing on the irregular set $\widehat X_{\varphi}$ (we will consider the topological pressure of the sets $X_{\varphi, \alpha}$ in future work). The motivation for proving multifractal analysis results where pressure is the dimension characteristic is twofold. Firstly, topological pressure is a non-trivial and natural generalisation of topological entropy, which is the standard dynamical dimension characteristic. 
Secondly, understanding the topological pressure of the multifractal decomposition allows us to prove results about the topological entropy of systems related to the original system, for example, suspension flows (see \S5).

Our main result (theorem \ref{theorem1}) is that when $f$ has the specification property, $\widehat X_{\varphi}$ carries full topological pressure or is the empty set. We give conditions on $\varphi$ which completely describe which of the two cases hold. 



The class of maps satisfying the specification property includes the time-$1$ map of the geodesic flow of compact connected negative curvature manifolds and certain quasi-hyperbolic toral automorphisms as well as any system which can be modelled by a topologically mixing shift of finite type (see \S 4 for details). 

The first to notice the phenomenon of the irregular set carrying full entropy were Pesin and Pitskel \cite{PP2} in the case of the Bernoulli shift on 2 symbols.
Barreira and Schmeling \cite{BS} studied the irregular set for a variety of uniformly hyperbolic systems using symbolic dynamics. They showed  that, for example, the irregular set of a generic H\"older continuous function on a conformal repeller has full entropy (and Hausdorff dimension). Our results apply to a more general class of systems and we consider irregular sets for continuous functions which are not H\"older. 

Takens and Verbitskiy have obtained multifractal analysis results for the class of maps with specification, using topological entropy as the dimension characteristic  \cite{TV}, \cite{TV2}. However, they do not consider the irregular set. 
Ercai, Kupper and Lin \cite{EKL} proved that the irregular set is either empty or carries full entropy for maps with the specification property. 
Our results were derived independently and include the result of \cite{EKL} as a special case. Our methods are largely inspired by those of Takens and Verbitskiy \cite{TV}. To the best of the author's knowledge, our result is the first about the pressure of the irregular set.

We apply our main result to show that the irregular set for a suspension flow over a map with specification has full topological entropy. By considering the `$u$-dimension' of the irregular set in the base, Barreira and Saussol \cite{BS3} proved analogous results which apply when the suspension is over a shift of finite type. They assume H\"older continuity of $\varphi$ and the roof function, whereas we require only continuity.

We expect that an analogue of our main theorem \ref{theorem1} holds for flows with the specification property, and that our current method of proof can be adapted to this setting (although we do not pursue this here). Such an approach would not cover every suspension flow to which our current results apply. In particular, a special flow (i.e. a suspension flow with constant roof function) over a map with specification never has the specification property itself, but is in the class of flows treated in \S5.
 
In \S 1, we take care of our preliminaries. In \S 2, we state our main results and key ideas of the proof. In \S 3, we prove our main theorem. In \S4, we describe examples of maps to which our results can be applied. In \S 5, we apply our main result to suspension flows.

\section{Preliminaries}
We give the definitions and fix the notation necessary to give a precise statement of our results, including topological entropy for non-compact sets and the specification property. Let $(X,d)$ be a compact metric space and $f:X \mapsto X$ a continuous map. Let $C(X)$ denote the space of continuous functions from $X$ to $\IR$, and $\varphi, \psi \in C(X)$. Let $S_n \varphi (x) := \sum_{i = 0}^{n-1} \varphi (f^i (x))$ and for $c >0$, let $\mbox{Var}(\varphi, c) := \sup \{ |\varphi(x) - \varphi(y)| : d(x,y)< c\}$. Let $\mathcal{M}_{f} (X)$ denote the space of $f$-invariant probability measures and $\mathcal{M}^e_{f} (X)$ denote those which are ergodic. If $X^\prime \subseteq X$ is an $f$-invariant subset, let $\mathcal{M}_{f} (X^\prime)$ denote the subset of $\mathcal{M}_{f} (X)$ for which the measures $\mu$ satisfy $\mu(X^\prime) =1$. We define, for later use, the empirical measures
\[
\delta_{x, n} = \frac{1}{n} \sum_{k=0}^{n-1} \delta_{f^k (x)},
\]
where $\delta_x$ is the Dirac measure at $x$.
\begin{definition}
Let $\epsilon > 0$.  Given $n > 0$ and a point $x \in X$, define the open $(n, \epsilon)$-ball at $x$ by
\[
\mathit{B}_{n}(x, \epsilon) = \{ y \in X : d( f^i(x), f^i(y)) < \epsilon \mbox{ for all } i = 0, \ldots , n-1\}.
\]
\end{definition}
Alternatively, let us define a new metric
\[
d_n (x, y) = \max \{ d( f^i(x), f^i(y)) : i = 0, 1, \ldots, n-1 \}.
\]
It is clear that $\mathit{B}_{n}(x, \epsilon)$ is the open ball of radius $\epsilon$ around $x$ in the $d_n$ metric, and that if $n \leq m$ we have $d_n (x, y) \leq d_m (x, y)$ and $\mathit{B}_{m}(x, \epsilon) \subseteq \mathit{B}_{n}(x, \epsilon)$. 
\begin{definition}
Let $Z \subset X$, $n \in \IN$ and $\epsilon > 0$. We say a set $\mc S \subset Z$ is an $(n, \epsilon)$ spanning set for $Z$ if for every $z \in Z$, there exists $x \in \mc S$ with $d_n (x, z) \leq \epsilon$. Let $N(Z, n, \epsilon)$ denote the smallest cardinality of a $(n, \epsilon)$ spanning set for $Z$. We say a set $\mc R \subset Z$ is an $(n, \epsilon)$ separated set for $Z$ if for every $x, y \in \mc R$, $d_n (x, y) > \epsilon$. Let $S(Z, n, \epsilon)$ denote the largest cardinality of a $(n, \epsilon)$ separated set for $Z$.
\end{definition}
See \cite{Wa} for the basic properties of spanning sets and seperated sets. 

\subsection{Definition of the topological pressure} \label{entropy}
Let $Z \subset X$ be an arbitrary Borel set, not necessarily compact or invariant. We use the definition of topological pressure as a characteristic of dimension type, due to Pesin and Pitskel. 
We consider finite and countable collections of the form $\Gamma = \{ B_{n_i}(x_i, \epsilon) \}_i$. 
For $\alpha \in \IR$, we define the following quantities:
\[
Q(Z,\alpha, \Gamma, \psi) = \sum_{B_{n_i}(x_i, \epsilon) \in \Gamma} \exp \left(-\alpha n_i +\sup_{x\in B_{n_i}(x_i, \epsilon)} \sum_{k=0}^{n_i -1} \psi(f^{k}(x)) \right),
\]
\[
M(Z, \alpha, \epsilon, N, \psi) = \inf_{\Gamma} Q(Z,\alpha, \Gamma, \psi),
\]
where the infimum is taken over all finite or countable collections of the form $\Gamma = \{ B_{n_i}(x_i, \epsilon) \}_i$ with $x_i \in X$ such that $\Gamma$ covers Z and $n_i \geq N$ for all $i = 1, 2, \ldots$. Define
\[
m(Z, \alpha, \epsilon, \psi) = \lim_{N \rightarrow \infty} M(Z, \alpha,\epsilon, N, \psi).
\]
The existence of the limit is guaranteed since the function $M(Z, \alpha,\epsilon, N)$ does not decrease with N. By standard techniques, we can show the existence of
\begin{displaymath}
P_Z (\psi, \epsilon) := \inf \{ \alpha : m(Z, \alpha, \epsilon, \psi) = 0\} = \sup \{ \alpha :m(Z, \alpha, \epsilon, \psi) = \infty \}.
\end{displaymath}
\begin{definition}
The topological pressure of $\psi$ on $Z$ is given by
\[
P_Z (\psi) = \lim_{\epsilon \ra 0} P_Z (\psi, \epsilon).
\]
\end{definition}
See \cite{Pe} for verification of well-definedness of the quantities $P_Z (\psi, \epsilon)$ and $P_Z (\psi)$.
\begin{theorem}
Topological pressure satisfies:

(1) $P_{Z_1} (\psi) \leq P_{Z_2} (\psi)$ if $Z_1 \subseteq Z_2 \subseteq X$;

(2) $P_Z (\psi) = \sup_{i \geq 1} P_{Z_i} (\psi)$ where $Z = \bigcup_{i \geq 1} Z_i$ for $i = 1, 2, \ldots$.
\end{theorem}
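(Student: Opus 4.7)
The plan is to derive both statements directly from the fact that $m(Z,\alpha,\epsilon,\psi)$ behaves like an outer measure: monotone in $Z$ and countably subadditive. Property (1) should come out immediately from the definitions: if $Z_1 \subseteq Z_2$, then any finite or countable collection $\Gamma = \{B_{n_i}(x_i,\epsilon)\}$ that covers $Z_2$ automatically covers $Z_1$. The infimum defining $M(Z_1,\alpha,\epsilon,N,\psi)$ is therefore taken over a larger family than that defining $M(Z_2,\alpha,\epsilon,N,\psi)$, which forces $M(Z_1,\cdots) \leq M(Z_2,\cdots)$, and hence $m(Z_1,\alpha,\epsilon,\psi) \leq m(Z_2,\alpha,\epsilon,\psi)$. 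So any $\alpha$ with $m(Z_2,\alpha,\epsilon,\psi)=0$ also satisfies $m(Z_1,\alpha,\epsilon,\psi)=0$, which gives $P_{Z_1}(\psi,\epsilon)\leq P_{Z_2}(\psi,\epsilon)$, and sending $\epsilon\to 0$ finishes (1).

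For (2), one direction is automatic from (1): each $Z_i\subseteq Z$ gives $P_{Z_i}(\psi)\leq P_Z(\psi)$, hence $\sup_i P_{Z_i}(\psi)\leq P_Z(\psi)$. The work is the reverse inequality, and I would obtain it by combining near-optimal covers for each $Z_i$. Fix $\epsilon>0$ and pick $\alpha > \sup_i P_{Z_i}(\psi)$, so in particular $\alpha > P_{Z_i}(\psi,\epsilon)$ for every $i$, and therefore $m(Z_i,\alpha,\epsilon,\psi)=0$ for every $i$. Given $\eta>0$ and $N \in \mathbb{N}$, choose for each $i$ a cover $\Gamma_i = \{B_{n^{(i)}_j}(x^{(i)}_j,\epsilon)\}_j$ of $Z_i$ with $n^{(i)}_j\geq N$ and $Q(Z_i,\alpha,\Gamma_i,\psi) < \eta/2^i$. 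Then $\Gamma := \bigcup_i \Gamma_i$ is a countable cover of $Z = \bigcup_i Z_i$ by $(n,\epsilon)$-balls with every $n \geq N$, and
\[
Q(Z,\alpha,\Gamma,\psi) \leq \sum_{i\geq 1} Q(Z_i,\alpha,\Gamma_i,\psi) < \sum_{i\geq 1} \eta/2^i = \eta.
\]
Thus $M(Z,\alpha,\epsilon,N,\psi) < \eta$ for all $N$ and $\eta$, which gives $m(Z,\alpha,\epsilon,\psi)=0$, so $P_Z(\psi,\epsilon)\leq \alpha$. Letting $\alpha \downarrow \sup_i P_{Z_i}(\psi)$ and then sending $\epsilon\to 0$ yields $P_Z(\psi)\leq \sup_i P_{Z_i}(\psi)$.

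The only subtlety worth watching is the interchange of $\lim_{\epsilon\to 0}$ with $\sup_i$; one avoids it by using the crude bound $P_{Z_i}(\psi,\epsilon)\leq P_{Z_i}(\psi)$ at fixed $\epsilon$ before taking the limit, as above. Aside from that, this is a routine verification of the Carathéodory-type axioms for $P_Z(\psi)$ in the style of \cite{Pe}, and there is no genuine obstacle.
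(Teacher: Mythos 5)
The paper does not actually prove this theorem; it simply cites Pesin's book \cite{Pe}, so your argument is necessarily independent. Your overall strategy — monotonicity of $M(Z,\alpha,\epsilon,N,\psi)$ in $Z$ for part (1), the easy direction of (2) from (1), and the $\eta/2^i$ combination of near-optimal covers for the hard direction of (2) — is the standard and correct verification of the Carath\'eodory-type axioms, and that portion of the argument is fine.

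The gap is the ``crude bound'' $P_{Z_i}(\psi,\epsilon) \leq P_{Z_i}(\psi)$. For entropy ($\psi = 0$) this holds because $\epsilon \mapsto h_Z(\epsilon)$ is non-increasing: shrinking $\epsilon$ shrinks the balls, so the infimum is over fewer covers. For pressure it fails, because the term $\sup_{x \in B_{n_i}(x_i,\epsilon)} S_{n_i}\psi(x)$ grows with $\epsilon$ and this works against the geometric shrinking; the two effects compete and there is no monotonicity in $\epsilon$. What you \emph{can} prove, by comparing a collection $\{B_{n_i}(x_i,\epsilon')\}$ with its $\epsilon$-enlargement $\{B_{n_i}(x_i,\epsilon)\}$ for $\epsilon' < \epsilon$, is $\sup_{B_{n_i}(x_i,\epsilon)} S_{n_i}\psi \leq \sup_{B_{n_i}(x_i,\epsilon')} S_{n_i}\psi + n_i \mathrm{Var}(\psi,\epsilon)$, which yields $P_Z(\psi,\epsilon) \leq P_Z(\psi,\epsilon') + \mathrm{Var}(\psi,\epsilon)$ and, letting $\epsilon' \to 0$, only $P_Z(\psi,\epsilon) \leq P_Z(\psi) + \mathrm{Var}(\psi,\epsilon)$. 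So your inequality is off by an additive error that does not vanish at fixed $\epsilon$. The repair is routine: run your $\eta/2^i$ argument with $\alpha > \sup_i P_{Z_i}(\psi,\epsilon)$ at a fixed $\epsilon$, obtaining $P_Z(\psi,\epsilon) \leq \sup_i P_{Z_i}(\psi,\epsilon)$, then use the uniform estimate $P_{Z_i}(\psi,\epsilon) \leq P_{Z_i}(\psi) + \mathrm{Var}(\psi,\epsilon)$ (uniform in $i$, because $\psi$ is uniformly continuous on the compact space $X$), and finally let $\epsilon \to 0$; the error term $\mathrm{Var}(\psi,\epsilon)$ tends to $0$ and you recover $P_Z(\psi) \leq \sup_i P_{Z_i}(\psi)$. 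You should replace the false crude bound with this modulus-of-continuity estimate.
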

If $Z$ is compact and invariant, our definition agrees with the usual topological pressure as defined in \cite{Wa}. We denote the topological pressure of the whole space by $P_X^{classic} (\psi)$, to emphasise that we are dealing with the familiar compact, invariant definition.  

\subsection{The specification property}
We are interested in transformations $f$ of the following type: 
\begin{definition} \label{3a}
A continuous map $f: X \mapsto X$ satisfies the specification property if for all $\epsilon > 0$, there exists an integer $m = m(\epsilon )$ such that for any collection $\left \{ I_j = [a_j, b_j ] \subset \IN : j = 1, \ldots, k \right \}$ of finite intervals with $a_{j+1} - b_j \geq m(\epsilon ) \mbox{ for } j = 1, \ldots, k-1 $ and any $x_1, \ldots, x_k$ in $X$, there exists a point $x \in X$ such that
\begin{equation} \label{3a.02}
d(f^{p + a_j}x, f^p x_j) < \epsilon \mbox{ for all } p = 0, \ldots, b_j - a_j \mbox{ and every } j = 1, \ldots, k.
\end{equation}
\end{definition}
The original definition of specification, due to Bowen, was stronger.
\begin{definition} \label{3a.2}
We say $f: X \mapsto X$ satisfies Bowen specification if under the assumptions of definition \ref{3a} and for every $p \geq b_k - a_1 + m(\epsilon)$, there exists a periodic point $x \in X$ of least period $p$ satisfying (\ref{3a.02}).
\end{definition}
One can describe a map $f$ with specification intuititively as follows. For any set of points $x_1, \ldots, x_k$ in $X$, there is an $x \in X$ whose orbit follows the orbits of all the points $x_1, \ldots, x_k$. In this way, one can connect together arbitrary pieces of orbit. If $f$ has Bowen specification, $x$ can be chosen to be a periodic point of any sufficiently large period. 

One can verify that a map with the specification property is topologically mixing. The following converse result holds \cite{Bl}, a recent proof of which is available in \cite{Bu}.
\begin{theorem} [Blokh Theorem]
A topologically mixing map of the interval has Bowen specification.
\end{theorem}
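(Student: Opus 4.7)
The plan is to prove that any topologically mixing continuous map $f:I\to I$ of the interval is \emph{locally eventually onto} (leo): for every nonempty open subinterval $J \subseteq I$, there exists $N=N(J)$ such that $f^n(J) = I$ for all $n \geq N$. Once leo is established, Bowen specification follows by an inductive orbit-concatenation argument together with a fixed-point observation that delivers a periodic shadowing orbit. I expect the leo property to be the main technical obstacle; the specification construction itself is then a relatively routine shadowing-type argument exploiting the one-dimensional topology.

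To prove leo, I would first use continuity and the intermediate value theorem to note that $f^n(J)$ is always an interval. Topological mixing implies that for each nonempty open $V$, $f^n(J) \cap V \neq \emptyset$ for all $n \geq N(J,V)$. Covering $I$ by finitely many small open intervals and exploiting that the sequence $f^n(J)$ consists of intervals that must meet each of these covering sets, one shows that once $n$ is large enough, $f^n(J)$ cannot omit any nontrivial subinterval of $I$, i.e.\ $f^n(J) = I$. (The standard way to avoid circularities here is to first observe that a topologically mixing interval map has no wandering intervals, and then use mixing to rule out proper invariant subintervals for any $f^n(J)$.)

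With leo in hand, fix $\epsilon > 0$. Cover $I$ by finitely many open intervals $U_1,\ldots,U_r$ of diameter less than $\epsilon$, and set $m(\epsilon) := \max_{1 \leq s \leq r} N(U_s)$, so that $f^m(U_s) = I$ for every $m \geq m(\epsilon)$ and every $s$. Now, given $x_1,\ldots,x_k \in I$ and intervals $I_j = [a_j,b_j]$ with $a_{j+1}-b_j \geq m(\epsilon)$, I would construct $x$ by backward induction. Define
\[
W_j := \{\, y \in I : d(f^p y, f^p x_j) < \epsilon \text{ for all } 0 \leq p \leq b_j - a_j \,\},
\]
which contains a nonempty open subinterval $V_j$ around $x_j$. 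I want $x$ with $f^{a_j}(x) \in W_j$ for every $j$. Starting from the last block and using leo applied to $V_k$, the preimage set $\{y : f^{a_k - b_{k-1}}(y) \in V_k\}$ meets every nonempty open subinterval because $f^{a_k-b_{k-1}}$ is onto any such $V_k$ by leo (since $a_k - b_{k-1} \geq m(\epsilon)$). Pulling back iteratively through the blocks, I obtain at each stage a nonempty open subinterval $E_j \subseteq W_j$ consisting of starting points for the $j$-th block that can be extended to the subsequent blocks. Setting $x$ to be any point in the final pulled-back interval $E_1$ yields the desired shadowing orbit.

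For Bowen specification, I need $x$ to be periodic of any prescribed period $p \geq b_k - a_1 + m(\epsilon)$. The refinement is to choose the open interval $E_1$ at the end of the above construction, and observe that $f^{p}(E_1)$, by leo (since $p - (b_k - a_1) \geq m(\epsilon)$), equals $I$, hence contains $E_1$. A continuous map of an interval whose image contains the interval has a fixed point there, so $f^p$ has a fixed point in $E_1$; this fixed point is a periodic point of period dividing $p$ and satisfies the shadowing condition. Extracting a point of \emph{least} period exactly $p$ requires a little extra care: one can shrink the open set $E_1$ further and use the standard fact that topologically mixing interval maps have periodic points of every sufficiently large period (a consequence of positive entropy and Sharkovskii's theorem) to avoid lower-period candidates, which is where I expect the bookkeeping to be most delicate.
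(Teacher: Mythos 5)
The paper does not prove Blokh's theorem: it states it and cites it, pointing to Blokh's original work and to a more recent proof by Buzzi. There is therefore no proof in the paper to compare your attempt against; what follows is an assessment of your outline on its own terms.

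Your overall scheme (show topological mixing gives a locally--eventually--onto property, build the specified orbit by a backward chain of pullbacks, then obtain a periodic point via a fixed-point argument) is a sensible plan, and leo is indeed closely tied to mixing for interval maps. But the specification construction is not, as you predict, the routine part --- it is where the content of the theorem actually lies, and your sketch has a genuine gap there.

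The gap is uniformity. Leo gives you, for each nonempty open interval $J$, a time $N(J)$ after which $f^{n}(J) \supset (0,1)$ for $n \geq N(J)$; by compactness one obtains a uniform time for all intervals of at least a fixed length, which is what your $m(\epsilon) = \max_s N(U_s)$ encodes. But in the inductive pullback step you must blow up the interval $V_j' := f^{b_j - a_j}(V_j)$, where $V_j$ is a connected component of the dynamical ball $B_{b_j - a_j + 1}(x_j, \epsilon)$. The only a priori information about $V_j'$ is that it sits inside $B(f^{b_j - a_j}(x_j), \epsilon)$; nothing in your argument bounds $|V_j'|$ from below uniformly in $x_j$ and in the block length $b_j - a_j$. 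For a continuous mixing interval map this lower bound can genuinely fail: if $f$ has a flat turning point (say $f(y) = f(c) + (y-c)^{2k}$ near $c$), a block whose orbit passes near $c$ just before time $b_j - a_j$ can make $V_j'$ arbitrarily short, and then $f^{m(\epsilon)}(V_j')$ need not cover, or even approach, the next target set. A specification constant $m(\epsilon)$ must be uniform over \emph{all} such dynamical balls, not just over intervals of a fixed minimal length, and producing that uniformity is exactly the nontrivial step your sketch skips. Your treatment of the periodic-point refinement (forcing least period exactly $p$) is, as you acknowledge, also incomplete, but that is secondary. Finally, the strong form $f^{N}(J) = I$ of leo can fail even for mixing interval maps when an endpoint is a fixed point with no other preimage; the correct statement, which suffices for your purposes, is $f^{N}(J) \supset (0,1)$.
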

A factor of a system with specification has specification. 
We give a survey of many interesting examples of maps with the specification property in \S \ref{examp}. 

We will actually study a weakening of the definition of specification as follows. Let $X^\prime \subseteq X$ be $f$-invariant (but not necessarily compact).
\begin{definition} \label{3a.3}
A continuous map $f: X \mapsto X$ satisfies the specification property on $X^\prime$ if for all $\epsilon > 0$, there exists an integer $m = m(\epsilon )$ such that for any collection $\left \{ I_j = [a_j, b_j ] \subset \IN : j = 1, \ldots, k \right \}$ of finite intervals with $a_{j+1} - b_j \geq m(\epsilon ) \mbox{ for } j = 1, \ldots, k-1 $ and any $x_1, \ldots, x_k$ in $X^\prime$, there exists a point $x \in X$ such that
\[
d(f^{p + a_j}x, f^p x_j) < \epsilon \mbox{ for all } p = 0, \ldots, b_j - a_j \mbox{ and every } j = 1, \ldots, k.
\]
\end{definition}
Our results generalise to this setting naturally with little extra difficulty in the proofs. Although we do not offer an application of this extra generality, we think that there may be examples of non-uniformly hyperbolic systems where definition \ref{3a.3} holds on an interesting subset but where definition \ref{3a} is not verifiable.


\subsection{Cohomology and the irregular set}
Let $\phi_1, \phi_2 \in C(X)$. We say $\phi_1$ is cohomologous to $\phi_2$
if they differ by a coboundary, i.e. there exists $h \in C(X)$ such that
\[
\phi_1 = \phi_2 + h - h \circ f.
\]
For a constant $c$, let $Cob (X, f, c)$ denote the space of functions cohomologous to $c$ and $\overline {Cob (X, f, c)}$ be the closure of $Cob (X, f, c)$ in the sup norm.

We recall that $\widehat X_\varphi$ is the irregular set for $\varphi$, defined at (\ref{ir}). By Birkhoff's ergodic theorem, $\mu(\widehat X_\varphi) = 0$ for all $\mu \in \mc M_f (X)$. The following lemma describes conditions equivalent to $\widehat X_\varphi$  being non-empty.
\bl \label{equiv}
When $f$ has specification, the following are equivalent:

(a) $\widehat X_\varphi$ is non-empty;

(b) $\frac{1}{n}S_n \varphi$ does not converge pointwise to a constant;

(c) $\inf_{\mu \in \mathcal{M}_{f} (X)} \int \varphi d \mu < \sup_{\mu \in \mathcal{M}_{f} (X)} \int \varphi d \mu$;

(d) $\inf_{\mu \in \mathcal{M}^e_{f} (X)} \int \varphi d \mu < \sup_{\mu \in \mathcal{M}^e_{f} (X)} \int \varphi d \mu$;

(e) $\varphi \notin \bigcup_{c \in \IR} \overline {Cob (X, f, c)}$;

(f) $\frac{1}{n}S_n \varphi$ does not converge uniformly to a constant.
\el
The argument for (c) $\iff$ (e) $\iff$ (f) was given to the author by Peter Walters and is sketched here. In fact, no assumption on $f$ other than continuity is required except to prove that (a) is implied by the other properties.


\begin{proof} [Proof of lemma \ref{equiv}]
We show the contrapositive of (e) $\Rightarrow$ (f). Suppose $\frac{1}{n}S_n \varphi$ converges uniformly to $c$. Define for $n \in \IN$
\[
h_n (x) = \frac{1}{n} \sum_{i=1}^{n-1} (n-i) \varphi (f^{i-1} x). 
\]
We can verify that $\varphi - \frac{1}{n} S_n \varphi = h_n - h_n \circ f$ and it follows that $\varphi \in \overline {Cob (X, f, c)}$. 
The contrapositive of (c) $\Rightarrow$ (e) is straight forward.
Now we prove (f) $\Rightarrow$ (c). Let $\mu_1 \in \mc M_f (X)$ and let $c := \int \varphi d \mu_1$. From (f), there exists $\epsilon > 0$ and sequences $n_k \ra \infty$ and $x_k \in X$ such that
\[
| \frac{1}{n_k} S_{n_k} \varphi (x_k) - c | > \epsilon.
\] 
Let $\nu_k = \delta_{x_k, n_k}$ and let $\mu_2$ be a limit point of the sequence $\nu_k$. Then $\mu_2 \in \mc M_f (X)$ and $\int \varphi d \mu_2 \neq c$, so we are done.

The contrapositive of (a) $\Rightarrow$ (f) is clearly true and (b) $\Rightarrow$ (f) is trivial. We use an ergodic decomposition argument for (c) $\Rightarrow$ (d). For (d) $\Rightarrow$ (b), we take $\mu_1, \mu_2 \in \mc M^e_f (X)$ such that $\int \varphi d \mu_1 < \int \varphi d \mu_2$. We can find $x_i$ such that $\frac{1}{n} S_n \varphi (x_i) \ra \int \varphi d \mu_i$ for $i = 1,2$ and we are done.

Direct proof of (c) $\Rightarrow$ (a) using the specification property is possible, however it is a corollary of our main theorem so we omit the proof. 
\end{proof}

We mention briefly the complement of the irregular set.
For $\alpha \in \IR$, we define 
\[
X_{\varphi, \alpha} = \left \{ x \in X : \lim_{n \ra \infty} \frac{1}{n} S_n \varphi (x) = \alpha \right \}.
\]
We define the multifractal spectrum for $\varphi$ to be $\mathcal L_\varphi := \{ \alpha \in \IR : X_{\varphi, \alpha} \neq \emptyset \}.$
When $f$ has the specification property, $\mathcal L_\varphi$ is a non-empty bounded interval \cite{TV} and $\mathcal L_\varphi = \{ \int \varphi d \mu : \mu \in \mathcal{M}_{f} (X) \}$. We omit the proof, since we are not focusing our attention on $\mc L_\varphi$. 

We deduce that for maps with specification, the conditions of lemma \ref{equiv} are equivalent to the non-empty bounded interval of values taken by $\mc L_\varphi$ not being equal to a single point. 

\section{Results}
We state our results and introduce the key technical tools of the proof.
\begin{theorem} \label {theorem0.1}
Let $(X,d)$ be a compact metric space and $f:X \mapsto X$ be a continuous map with the specification property. Assume that $\varphi \in C(X)$ satisfies $\inf_{\mu \in \mathcal{M}_{f} (X)} \int \varphi d \mu < \sup_{\mu \in \mathcal{M}_{f} (X)} \int \varphi d \mu$. Let $\widehat X_\varphi$ be the irregular set for $\varphi$ defined as in (\ref{ir}), then $P_{\widehat X_\varphi} (\psi) = P_X^{classic} (\psi)$ for all $\psi \in C(X)$.
\end{theorem}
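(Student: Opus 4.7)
The plan follows the Takens--Verbitskiy strategy, adapted to the Pesin--Pitskel pressure. Fix $\gamma>0$. By the variational principle choose $\mu_0 \in \mc M_f(X)$ with $h_{\mu_0}(f) + \int \psi\, d\mu_0 > P_X^{classic}(\psi) - \gamma$, and by the hypothesis together with lemma \ref{equiv}(d) choose ergodic $\alpha_1, \alpha_2 \in \mc M^e_f(X)$ with $\int \varphi\, d\alpha_1 \neq \int\varphi\, d\alpha_2$. For small $\delta>0$ put $\mu_i := (1-\delta)\mu_0 + \delta \alpha_i$; then $\int \varphi\, d\mu_1 \neq \int\varphi\, d\mu_2$, while affinity of $\mu \mapsto h_\mu(f) + \int \psi\, d\mu$ forces $h_{\mu_i}(f) + \int\psi\, d\mu_i > P_X^{classic}(\psi) - 2\gamma$ once $\delta$ is small enough. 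It therefore suffices to produce $F \subset \widehat X_\varphi$ with $P_F(\psi) \geq P_X^{classic}(\psi) - 5\gamma$ and let $\gamma \to 0$.

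The set $F$ will be Moran--Cantor. For each $k$ and each $i \in \{1,2\}$ we use the specification property to produce an $(n_k^{(i)}, 8\epsilon)$-separated set $\mc S_k^{(i)} \subset X$ of points that are \emph{$\mu_i$-typical}, in the sense that $\#\mc S_k^{(i)} \geq \exp(n_k^{(i)}(h_{\mu_i}(f) - \gamma))$ and, for every $x \in \mc S_k^{(i)}$ and every $\chi \in \{\varphi, \psi\}$, $|\tfrac{1}{n_k^{(i)}} S_{n_k^{(i)}}\chi(x) - \int \chi\, d\mu_i| < \gamma$. For ergodic $\mu_i$ this would follow from a Katok-type entropy lower bound; the present $\mu_i$ are non-ergodic, but their ergodic decomposition involves only $\mu_0$ and $\alpha_i$, and specification permits one to patch together typical orbits of these components into single long $\mu_i$-typical orbits of the stated separation cardinality. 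We then concatenate, using specification with spacers of length $m(\epsilon)$, one segment from $\mc S_k^{(i_k)}$ at each stage $k$, where $i_k=1$ for odd $k$ and $i_k=2$ for even $k$. Writing $N_k$ for the cumulative length after stage $k$ and choosing $n_k^{(i_k)}$ to grow so rapidly that $n_k^{(i_k)} \gamma \gg N_{k-1}\|\varphi\|_\infty$, any shadowing point $x$ satisfies $\tfrac{1}{N_k} S_{N_k}\varphi(x) \to \int\varphi\, d\mu_{i_k}$ along each parity of $k$, so that $x \in \widehat X_\varphi$. Let $F$ be the collection of all points that shadow, in this sense, some admissible stagewise choice.

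For the pressure lower bound, distinct stagewise choices produce points which remain $(N_k, 4\epsilon)$-separated, a standard consequence of the $8\epsilon$ separation and the triangle inequality through the $\epsilon$-spacers. Hence $F$ has at least $\prod_{j\leq k}\#\mc S_j^{(i_j)}$ branches through stage $k$, each carrying $\psi$-weight at least $\exp(\sum_{j\leq k} n_j^{(i_j)} (\int\psi\, d\mu_{i_j} - \gamma) - Ck)$, where $C = m(\epsilon)\|\psi\|_\infty$. Passing to logarithms and dividing by $N_k$ gives a lower rate of at least $\min_{i=1,2}(h_{\mu_i}(f) + \int\psi\, d\mu_i) - 3\gamma \geq P_X^{classic}(\psi) - 5\gamma$. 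The main technical obstacle is converting this combinatorial bound into a genuine lower bound on the Pesin--Pitskel quantity $M(F, \alpha, \epsilon, N, \psi)$: an arbitrary cover $\Gamma = \{B_{n_i}(x_i,\epsilon)\}$ of $F$ need not have its scales $n_i$ aligned with the Moran scales $N_k$, and one must argue that any such cover can be refined, without losing more than a controllable factor in $Q$, to one ``anchored'' at a single $N_k$ so that the Moran separation is available. This Bowen-style alignment, which exploits the gap between $8\epsilon$ and $2\epsilon$ to absorb the $\mbox{Var}(\psi,\epsilon)$ errors arising from replacing $\sup_{B_{n_i}(x_i,\epsilon)} S_{n_i}\psi$ by $S_{n_i}\psi(x_i)$, is the technical heart of the argument; once it is in place, $P_F(\psi,\epsilon) \geq P_X^{classic}(\psi) - 5\gamma$ follows, and letting $\gamma, \epsilon \to 0$ completes the proof.
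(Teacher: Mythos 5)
Your reduction to a pair of measures $\mu_1,\mu_2$ with distinct $\varphi$-integrals and near-optimal $h_\mu+\int\psi\,d\mu$, and your plan to build a Moran--Cantor set by concatenating, via specification, long orbit segments that are alternately $\mu_1$- and $\mu_2$-typical, is the same strategy the paper follows. Two remarks on details you compress: (i) your ``patching'' of $\mu_0$- and $\alpha_i$-typical pieces to handle the non-ergodicity of $\mu_i$ is exactly what the paper carries out in its \S3 modification of the construction, so this can be made to work, though it needs the quantitative bookkeeping (lengths $[t_1\hat n_k]$, $[t_2\hat n_k]$) spelled out; (ii) the paper instead obtains ergodic $\mu_1,\mu_2$ directly via a result of Eizenberg--Kifer--Weiss (density of ergodic measures with entropy approximation under specification and upper semi-continuity of entropy), reserving the patching device for the self-contained proof where that hypothesis is dropped.

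The genuine gap is precisely the step you flag as ``the technical heart'': you have no argument converting the branch-counting estimate on $F$ into a lower bound on the Pesin--Pitskel quantity $M(F,\alpha,\epsilon,N,\psi)$ over \emph{arbitrary} covers. The ``Bowen-style alignment'' you invoke is not a standard tool in this setting, and for good reason --- the covers in the Pesin--Pitskel definition can use balls $B_{n_i}(x_i,\epsilon)$ with wildly incommensurate $n_i$, and refining them to a common Moran scale without loss is not routine. The paper avoids this entirely by proving and applying a (generalised) \emph{pressure distribution principle}: one constructs a sequence of probability measures $\mu_k$ weighted by $\exp S_{n}\psi$ along the Moran tree, shows every limit measure $\nu$ satisfies $\nu(F)>0$, and shows $\limsup_k \mu_k(B_n(x,\epsilon))\leq K\exp\{-ns+S_n\psi(x)\}$ uniformly over balls meeting $F$; the distribution principle then yields $P_F(\psi,\epsilon)\geq s$ directly, with no cover-alignment argument at all. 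This is the mass-distribution idea from fractal geometry transported to pressure, and it is the essential ingredient your proposal lacks. Without it (or an equivalent device), the final implication ``combinatorial branch bound $\Rightarrow P_F(\psi,\epsilon)\geq P^{classic}_X(\psi)-5\gamma$'' remains unproved, and the proposal does not close.
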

We remark that lemma \ref{equiv} provides us with other natural interpretations of the assumption $\inf_{\mu \in \mathcal{M}_{f} (X)} \int \varphi d \mu < \sup_{\mu \in \mathcal{M}_{f} (X)} \int \varphi d \mu$. We state the assumption in this way because it is natural for the method of proof. If our assumption fails, then $\widehat X_\varphi = \emptyset$.

In fact, we prove a slightly stronger version of the theorem.
\begin{theorem} \label {theorem1}
Let $(X,d)$ be a compact metric space, $f:X \mapsto X$ be a continuous map and $X^\prime \subseteq X$ be $f$-invariant. Assume $f$ satisfies the specification property on $X^\prime$. Assume that $\varphi \in C(X)$ satisfies $\inf_{\mu \in \mathcal{M}_{f} (X^\prime)} \int \varphi d \mu < \sup_{\mu \in \mathcal{M}_{f} (X^\prime)} \int \varphi d \mu$. Let $\widehat X_\varphi$ be the irregular set for $\varphi$ defined as in (\ref{ir}), then for all $\psi \in C(X)$,
\[
P_{\widehat X_\varphi} (\psi) \geq \sup \left \{ h_\mu + \int \psi d \mu : \mu \in \mathcal M_f (X^\prime) \right \}.
\]
If $\sup \left \{ h_\mu + \int \psi d \mu : \mu \in \mathcal M_f (X^\prime) \right \} = P_X^{classic} (\psi)$, then we have $P_{\widehat X_\varphi} (\psi) = P_X^{classic} (\psi)$.
\end{theorem}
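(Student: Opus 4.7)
The plan is to prove the inequality $P_{\widehat X_\varphi}(\psi) \geq h_\mu + \int \psi\, d\mu$ for an arbitrary $\mu \in \mc M_f(X^\prime)$, and then take the supremum; the final sentence follows immediately, since the classical variational principle asserts $P_X^{classic}(\psi) = \sup\{h_\nu + \int \psi\, d\nu : \nu \in \mc M_f(X)\}$, so assuming this supremum is attained (or well-approximated) by measures supported on $X^\prime$ gives equality. A routine ergodic decomposition argument reduces the problem to ergodic $\mu$: if $\mu = \int \mu_x\, d\mu(x)$ is its decomposition, then $h_\mu + \int \psi\, d\mu = \int (h_{\mu_x} + \int \psi\, d\mu_x)\, d\mu(x)$, so some ergodic component does at least as well.

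Fix ergodic $\mu \in \mc M_f(X^\prime)$. By the hypothesis together with Lemma \ref{equiv}(c)$\iff$(d), choose another ergodic $\nu \in \mc M_f(X^\prime)$ with $\int \varphi\, d\nu \neq \int \varphi\, d\mu$; set $a = \int \varphi\, d\mu$ and $b = \int \varphi\, d\nu$. I would then pick rapidly shrinking $\epsilon_k \downarrow 0$, and using the Katok-type entropy formula for $\mu$ together with Birkhoff's theorem, select large $n_k$ and a maximal $(n_k, 4\epsilon_k)$-separated set $\mc R_k \subset X^\prime$ of $\mu$-typical points, that is points $x$ for which
\[
|S_{n_k}\varphi(x) - n_k a| < n_k / k, \qquad |S_{n_k}\psi(x) - n_k\textstyle\int \psi\, d\mu| < n_k / k,
\]
and with $|\mc R_k| \geq \exp\bigl(n_k(h_\mu + \int \psi\, d\mu - 1/k)\bigr)$. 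Independently, pick a $\nu$-typical point $y_k$ and a length $N_k$ with $|S_{N_k}\varphi(y_k) - N_k b| < N_k/k$; the lengths $N_k$ are chosen so that $N_k/n_k \to 0$ but $N_k(b-a)$ is still large enough to dominate the accumulated error so that the Birkhoff averages of $\varphi$ oscillate.

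Using the specification constant $m_k = m(\epsilon_k)$, I would carry out a Moran-type construction: for each sequence $(x_1, x_2, \dots)$ with $x_k \in \mc R_k$, specification produces a point whose orbit $\epsilon_k$-shadows $x_k$ for $n_k$ iterates, then shadows $y_k$ for $N_k$ iterates (after a gap of $m_k$), then shadows $x_{k+1}$, and so on. Call $F$ the closure of all such shadowing points. Interleaving $\mu$-typical and $\nu$-typical segments forces any point of $F$ to have Birkhoff averages of $\varphi$ that come arbitrarily close to $a$ along the ends of the $x_k$-segments and arbitrarily close to $b$ along the ends of the $y_k$-segments; hence $F \subset \widehat X_\varphi$. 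To bound the pressure I would place on $F$ a Bernoulli-type measure $m$ giving each cylinder $[x_1, \dots, x_k]$ mass $\prod_{j=1}^k |\mc R_j|^{-1}$, then apply a Frostman/mass-distribution argument at scale $\epsilon_k$: each $(T_k, 2\epsilon_k)$-ball, where $T_k = \sum_{j \leq k}(n_j + m_j + N_j + m_j)$, intersects at most one cylinder (by the separation of $\mc R_k$ and the choice of $\epsilon_k$), so cover sums are controlled by
\[
\sum \exp\Bigl(-\alpha T_k + \sup_{B} S_{T_k}\psi\Bigr) \leq \sum |\mc R_k|^{-1} \exp\bigl(-\alpha T_k + T_k\textstyle\int \psi\, d\mu + o(T_k)\bigr),
\]
which forces $P_{\widehat X_\varphi}(\psi, \epsilon_k) \geq h_\mu + \int \psi\, d\mu - O(1/k)$, and then $\epsilon_k \downarrow 0$ finishes the proof.

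The main obstacle is the bookkeeping in choosing the scales $n_k, N_k, \epsilon_k$ and the specification gaps $m_k$ so that three competing requirements hold simultaneously: (i) the $\nu$-segments of length $N_k$ together with the gaps $m_k$ have total length $o(n_k)$ so they do not erode the pressure produced by $\mc R_k$; (ii) the $\nu$-segments are long enough that $|b-a|\,N_k$ dominates the cumulative error incurred from tracking errors $\mathrm{Var}(\varphi, \epsilon_k)$ and from the approximation $|S_{n_k}\varphi(x) - n_k a| < n_k/k$, so the Birkhoff average of $\varphi$ genuinely fails to converge; and (iii) the separation at scale $\epsilon_k$ of concatenated orbits must survive the shadowing, which is where the specification-on-$X^\prime$ hypothesis is essential (points of $\mc R_k$ lie in $X^\prime$, so Definition \ref{3a.3} applies). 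Handling these three constraints in a single limiting scheme, and verifying that the measure $m$ pushed forward onto $F$ satisfies the local estimates needed for the mass distribution lemma at each scale $\epsilon_k$, is the technical heart of the argument.
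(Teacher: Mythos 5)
Your scaffold — Katok-type separated sets of $\mu$-typical points, specification-gluing to build a Cantor set $F$, a mass-distribution argument to lower-bound the pressure — is the right outline and matches the paper. But the scheme as written has a genuine gap, and in fact your requirements (i) and (ii), which you call ``competing,'' are actually \emph{contradictory}, not merely in tension. If the $\nu$-segments of length $N_k$ satisfy $N_k = o(n_k)$, then at the end of the $k$-th $\nu$-segment the total elapsed time $T$ satisfies $T \geq n_k \gg N_k$, so the $\nu$-segment contributes only an $o(1)$ fraction to the Birkhoff sum. Dominating the \emph{error} term $T/k$ is not enough: to move the Birkhoff average from near $a = \int \varphi\, d\mu$ to a definite distance away (towards $b$), the $\nu$-segment must occupy a fraction of $T$ bounded away from zero, which forces $N_k \gtrsim T \geq n_k$ and destroys (i). Under your scale hierarchy, every point of $F$ has Birkhoff average of $\varphi$ converging to $a$, so $F \not\subset \widehat X_\varphi$, and the whole argument collapses at that step.

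The paper's resolution (\S\ref{modif}) is to replace your negligible pure-$\nu$ blocks with long ``mixed'' blocks that simulate the convex combination $\mu_2 = t_1 \mu_1 + t_2 \nu$ with $t_1$ close to $1$. Concretely, a mixed block of length $\hat n_k$ consists of $[t_1 \hat n_k]$ iterates shadowing a point of a separated set for a $\mu_1$-typical set, followed (after a specification gap) by $[t_2 \hat n_k]$ iterates shadowing a point of a separated set for a $\nu$-typical set. This block has two crucial properties your scheme lacks simultaneously: its contribution to the pressure estimate is $M_k^1 M_k^2 \gtrsim \exp\bigl(\hat n_k (h_{\mu_2} + \int \psi\, d\mu_2 - O(\gamma))\bigr)$, which is within $\gamma$ of the target $C$ because $h_{\mu_2} + \int \psi\, d\mu_2 = t_1(h_{\mu_1} + \int \psi\, d\mu_1) + t_2(h_\nu + \int \psi\, d\nu) > C - \gamma$ for $t_1$ close to $1$; and its $\varphi$-average converges to $\int \varphi\, d\mu_2 \neq \int \varphi\, d\mu_1$. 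Consequently, blocks of both types can be made arbitrarily long — each $\mathcal C_k$-block much longer than the combined length of all earlier blocks — and the running Birkhoff average of $\varphi$ genuinely oscillates between a neighbourhood of $\int \varphi\, d\mu_1$ and a neighbourhood of $\int\varphi\, d\mu_2$, while the pressure estimate never drops. That interplay between a single near-maximal ergodic $\mu_1$ and the convex combination $\mu_2$, rather than between $\mu$ and a short pure $\nu$-segment, is the missing idea.

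One further minor point: appealing to Lemma \ref{equiv}(c)$\iff$(d) strictly concerns $\mc M_f(X)$, not $\mc M_f(X^\prime)$, but the ergodic-decomposition step transfers with no change; that part of your argument is fine.
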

If $\mathcal{M}_{f} (X^\prime)$ is dense in $\mathcal{M}_{f} (X)$, we need only assume $\inf_{\mu \in \mathcal{M}_{f} (X)} \int \varphi d \mu < \sup_{\mu \in \mathcal{M}_{f} (X)} \int \varphi d \mu$. We adapt an ingenious method of Takens and Verbitskiy, which can be found in \S5 of \cite{TV} and was in turn developed from a large deviations proof of Young \cite{Yo}. The key ingredients for the Takens and Verbitskiy proof are an application of the Entropy Distribution Principle \cite{TV} and Katok's formula for measure-theoretic entropy \cite{K}. We are required to generalise both. We offer two generalisations of the Entropy Distribution Principle. While the first offers a more straight forward generalisation, we will use the second as it offers us a short cut in the proof later on. We prove only the second, as the proof of the first is similar.
\begin{proposition}[Pressure distribution principle]
Let $f : X \mapsto X$ be a continuous transformation. Let $Z \subseteq X$ be an arbitrary Borel set. Suppose there exists a constant $s \geq 0$ such that for sufficiently small $\epsilon > 0$ one can find a Borel probability measure $\mu_\epsilon$ and a constant $K(\epsilon ) > 0$ satisfying $\mu_\epsilon (Z) > 0$ and $\mu_\epsilon (B_n (x, \epsilon)) \leq K(\epsilon ) \exp \{-ns + \sum_{i=0}^{n-1} \psi(f^i x)\}$ for sufficiently large $n$ and every ball $B_n (x, \epsilon)$ which has non-empty intersection with $Z$. Then $P_Z (\psi) \geq s$.
\end{proposition}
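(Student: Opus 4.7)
The plan is to show that $m(Z,s,\epsilon,\psi)>0$ for every sufficiently small $\epsilon$, since this forces $P_Z(\psi,\epsilon)\geq s$ directly from the definition of $P_Z(\psi,\epsilon)$ as the critical value of $\alpha$, and then letting $\epsilon\to 0$ yields $P_Z(\psi)\geq s$. To do this I fix a small $\epsilon>0$ admitting such a $\mu_\epsilon$ and $K(\epsilon)$, choose $N$ large enough that the hypothesised inequality applies for every $n\geq N$, and consider an arbitrary finite or countable cover $\Gamma=\{B_{n_i}(x_i,\epsilon)\}_i$ of $Z$ with $n_i\geq N$ for all $i$. The goal is to bound $Q(Z,s,\Gamma,\psi)$ from below by a positive constant depending only on $\epsilon$.

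I would first discard from $\Gamma$ any ball disjoint from $Z$; this only decreases $Q$, so it suffices to bound $Q$ below assuming every $B_{n_i}(x_i,\epsilon)\in\Gamma$ meets $Z$. The hypothesis then applies to each ball, giving
\[
\mu_\epsilon\bigl(B_{n_i}(x_i,\epsilon)\bigr) \leq K(\epsilon)\exp\Bigl\{-n_i s + \sum_{k=0}^{n_i-1}\psi(f^k x_i)\Bigr\}.
\]
Since $x_i\in B_{n_i}(x_i,\epsilon)$, the Birkhoff sum at the centre is dominated by the corresponding supremum over the ball, so
\[
\mu_\epsilon\bigl(B_{n_i}(x_i,\epsilon)\bigr) \leq K(\epsilon)\exp\Bigl\{-n_i s + \sup_{x\in B_{n_i}(x_i,\epsilon)}\sum_{k=0}^{n_i-1}\psi(f^k x)\Bigr\}.
\]

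Summing over $i$ and using the countable subadditivity of $\mu_\epsilon$ together with $Z\subseteq\bigcup_i B_{n_i}(x_i,\epsilon)$, I obtain
\[
\mu_\epsilon(Z) \leq \sum_i \mu_\epsilon\bigl(B_{n_i}(x_i,\epsilon)\bigr) \leq K(\epsilon)\, Q(Z,s,\Gamma,\psi).
\]
Thus $Q(Z,s,\Gamma,\psi) \geq \mu_\epsilon(Z)/K(\epsilon)$ for every admissible $\Gamma$, which gives $M(Z,s,\epsilon,N,\psi) \geq \mu_\epsilon(Z)/K(\epsilon)>0$, and taking $N\to\infty$ produces $m(Z,s,\epsilon,\psi)\geq \mu_\epsilon(Z)/K(\epsilon)>0$, as required.

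This is essentially a bookkeeping argument; the measure $\mu_\epsilon$ does all the work, just as in the classical mass distribution principle. There is no genuine obstacle once the definitions are unpacked. The only mildly delicate point is ensuring the hypothesis applies uniformly to every ball in the cover, which is handled by restricting to $n_i\geq N$ with $N$ chosen after $\epsilon$ is fixed, together with the observation that balls not meeting $Z$ may be thrown away without loss.
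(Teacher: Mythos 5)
Your proof is correct and follows essentially the same route the paper takes for its (stated-only-for-the-generalised-case) proof: discard balls missing $Z$, apply the hypothesised measure bound to each ball, compare to the supremum defining $Q$, sum and invoke subadditivity to get $Q(Z,s,\Gamma,\psi)\geq \mu_\epsilon(Z)/K(\epsilon)$ uniformly over admissible covers, and conclude $P_Z(\psi,\epsilon)\geq s$ and then let $\epsilon\to 0$. The paper explicitly omits the proof of this first version as "similar" to the generalised one, and your argument is exactly that similar proof.
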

\begin{proposition}[Generalised pressure distribution principle] \label {theorem3}
Let $f : X \mapsto X$ be a continuous transformation. Let $Z \subseteq X$ be an arbitrary Borel set. Suppose there exists $\epsilon > 0$ and $s \geq 0$ such that one can find a sequence of Borel probability measures $\mu_k$, a constant $K> 0$, and a limit measure $\nu$ of the sequence $\mu_{k}$ satisfying $\nu(Z) > 0$ such that 
\[
\limsup_{k \ra \infty} \mu_{k} (B_n (x, \epsilon)) \leq K \exp \{-ns + \sum_{i=0}^{n-1} \psi(f^i x)\}
\] 
for sufficiently large $n$ and every ball $B_n (x, \epsilon)$ which has non-empty intersection with $Z$. Then $P_Z (\psi, \epsilon) \geq s$.
\end{proposition}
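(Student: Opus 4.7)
The plan is to adapt the standard pressure distribution principle by exploiting the sequential hypothesis together with weak-$\ast$ lower semicontinuity of measure on open sets. The crucial observation is that Bowen balls $B_n(x,\epsilon)$ are open in $X$ (since they are defined with strict inequality), so the Portmanteau theorem applies to the weak-$\ast$ limit $\nu$ of (a subsequence of) the $\mu_k$. Passing to this subsequence if necessary, we obtain
\[
\nu(B_n(x,\epsilon)) \leq \liminf_{k \ra \infty} \mu_k(B_n(x,\epsilon)) \leq \limsup_{k \ra \infty} \mu_k(B_n(x,\epsilon)) \leq K \exp\left\{-ns + \sum_{i=0}^{n-1}\psi(f^i x)\right\}
\]
for every $n \geq N_0$ and every ball $B_n(x,\epsilon)$ meeting $Z$. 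This reduces the situation to the usual distribution principle setup, with the single measure $\nu$ replacing the $\mu_\epsilon$ of the first proposition.

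Next, I would fix $N \geq N_0$ and a candidate cover $\Gamma = \{B_{n_i}(x_i,\epsilon)\}_i$ of $Z$ with $n_i \geq N$ for all $i$. Discarding any balls disjoint from $Z$ (which only decreases $Q$), countable subadditivity of $\nu$ gives
\[
0 < \nu(Z) \leq \sum_i \nu(B_{n_i}(x_i,\epsilon)) \leq K\sum_i \exp\left\{-sn_i + \sum_{k=0}^{n_i-1}\psi(f^k x_i)\right\} \leq K \cdot Q(Z,s,\Gamma,\psi),
\]
where the final step uses $S_{n_i}\psi(x_i) \leq \sup_{y \in B_{n_i}(x_i,\epsilon)} S_{n_i}\psi(y)$. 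Taking the infimum over admissible $\Gamma$ yields $M(Z,s,\epsilon,N,\psi) \geq \nu(Z)/K$, and then letting $N \ra \infty$ gives $m(Z,s,\epsilon,\psi) \geq \nu(Z)/K > 0$.

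Finally, since $\alpha \mapsto m(Z,\alpha,\epsilon,\psi)$ is non-increasing, the strict positivity $m(Z,s,\epsilon,\psi) > 0$ forces $s$ to lie below (or at) the critical value $\inf\{\alpha : m(Z,\alpha,\epsilon,\psi) = 0\} = P_Z(\psi,\epsilon)$, which is precisely the desired conclusion $P_Z(\psi,\epsilon) \geq s$.

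The main subtlety, and essentially the only place one must be careful, is the use of weak-$\ast$ lower semicontinuity: the limit measure $\nu$ need not satisfy $\nu(B_n(x,\epsilon)) \leq K e^{-ns + S_n\psi(x)}$ directly from the assumption, since the hypothesis bounds only the measures $\mu_k$. The openness of Bowen balls is what makes this step legal, and it is the single ingredient distinguishing this generalised principle from the original one. Everything else is a straightforward repetition of the standard argument for the pressure distribution principle.
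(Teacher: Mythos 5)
Your proposal is correct and follows essentially the same argument as the paper: discard balls missing $Z$, use openness of Bowen balls and the Portmanteau inequality to pass from the $\limsup$ hypothesis on $\mu_k$ to a bound on $\nu$, sum over the cover to get $Q(Z,s,\Gamma,\psi) \geq K^{-1}\nu(Z) > 0$, and conclude $P_Z(\psi,\epsilon) \geq s$. You are somewhat more explicit than the paper about the role of openness and weak-$\ast$ lower semicontinuity, but the underlying mechanism is identical.
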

\begin{proof}
Choose $\epsilon > 0$ and measure $\nu$ satisfying the conditions of the theorem. Let $\Gamma = \{ B_{n_i} (x_i, \epsilon) \}_i$ cover $Z$ with all $n_i$ sufficiently large. We may assume that $B_{n_i} (x_i, \epsilon) \cap Z \neq \emptyset$ for every $i$. Then 
\begin{eqnarray*}
Q (Z, s, \Gamma, \psi) & = & \sum_i \exp \left \{-sn_i + \sup_{y \in B_{n_i} (x_i, \epsilon)} \sum_{k=0}^{n_i -1} \psi(f^{k}(y)) \right \}\\
& \geq & \sum_i \exp \left \{-sn_i + \sum_{k=0}^{n_i -1} \psi(f^{k}(x_i)) \right \}\\
&\geq & \ K^{-1} \sum_i  \limsup_{k \ra \infty} \mu_{k} (B_n (x_i, \epsilon)) \\
& \geq & \ K^{-1} \sum_i \nu (B_n (x_i, \epsilon)) \geq K^{-1} \nu (Z) > 0
\end{eqnarray*}
So $M (Z, s, \epsilon, \psi) > 0$ and thus $P_Z (\psi, \epsilon) \geq s$. 
\end{proof}
The following result generalises Katok's formula for measure-theoretic entropy. In \cite{Me}, Mendoza gave a proof based on ideas from the Misiurewicz proof of the variational principle. Although he states the result under the assumption that $f$ is a homeomorphism, his proof works for $f$ continuous.
\begin{proposition} \label {theorem4}
Let $(X,d)$ be a compact metric space, $f:X \mapsto X$ be a continuous map and $\mu$ be an ergodic invariant measure. For $\epsilon > 0$, $\gamma \in (0, 1)$ and $\varphi \in C(X)$, define 
\[
N^\mu (\psi, \gamma, \epsilon, n) = \inf \left \{ \sum_{x \in S} \exp \left \{ \sum_{i=0}^{n -1} \psi(f^{i}x)\right \} \right \}
\] 
where the infimum is taken over all sets $S$ which $(n, \epsilon)$ span some set $Z$ with $\mu(Z) \geq 1 - \gamma$. We have
\[
h_\mu + \int \psi d \mu  = \lim_{\epsilon \ra 0} \liminf_{n \ra \infty} \frac{1}{n} \log N^\mu (\psi, \gamma, \epsilon, n).
\]
The formula remains true if we replace the $\liminf$ by $\limsup$. 
\end{proposition}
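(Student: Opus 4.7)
The plan is to prove the two inequalities separately, adapting the Misiurewicz-style variational-principle method attributed to Mendoza. The two key analytic tools are Birkhoff's ergodic theorem applied to $\psi$ (to approximate $S_n\psi$ by $n\int\psi\,d\mu$ on a large-measure set), and either the Shannon--McMillan--Breiman theorem or the Brin--Katok local entropy formula (to control the size of atoms of iterated partitions, or the measure of dynamical balls, in terms of $h_\mu$). A third, standard ingredient is the uniform bound $|S_n\psi(x) - S_n\psi(y)| \leq n\,\text{Var}(\psi, \epsilon)$ whenever $d_n(x,y) < \epsilon$, which lets us interchange values of $S_n\psi$ between representatives of spanning sets and arbitrary points of their $(n,\epsilon)$-balls.

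For the upper bound, I would fix $\delta > 0$ and choose a finite partition $\xi$ with $\text{diam}(\xi) < \epsilon$ and $\mu(\partial \xi) = 0$. Shannon--McMillan--Breiman and Birkhoff together furnish, for all sufficiently large $n$, a set $G_n$ with $\mu(G_n) > 1-\gamma$ on which $\mu(\xi^n(x)) \geq \exp(-n(h_\mu(f,\xi)+\delta))$ and $|S_n\psi(x) - n\int\psi\,d\mu| < n\delta$. Since $\text{diam}(\xi)<\epsilon$, each $\xi^n$-atom is contained in the $(n,\epsilon)$-ball around any of its points, so picking one point of $G_n$ from each atom meeting $G_n$ produces an $(n,\epsilon)$-spanning set of $G_n$ with cardinality at most $\exp(n(h_\mu(f,\xi)+\delta))$ and weighted sum at most $\exp(n(h_\mu(f,\xi) + \int\psi\,d\mu + 2\delta))$. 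This bound, valid for every sufficiently large $n$, controls $\limsup_n \frac{1}{n}\log N^\mu$, and letting $\delta \to 0$ and $\text{diam}(\xi) \to 0$ (so that $h_\mu(f,\xi) \to h_\mu$) gives the upper bound for both $\liminf$ and $\limsup$.

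The lower bound is the main obstacle. I would use the Brin--Katok local entropy formula together with Egoroff and Birkhoff to produce, for $\epsilon$ small and $n$ large, a set $G_n$ with $\mu(G_n) > 1-\gamma/2$ on which $\mu(B_n(y,2\epsilon)) \leq \exp(-n(h_\mu-\delta))$ and $S_n\psi$ is within $n\delta$ of $n\int\psi\,d\mu$ uniformly. Given any $(n,\epsilon)$-spanning set $S$ of a set $Z$ with $\mu(Z) \geq 1-\gamma$, the intersection $Z \cap G_n$ still has measure at least $1-3\gamma/2$, and for every point $x \in S$ whose $(n,\epsilon)$-ball meets $Z \cap G_n$ one chooses $y_x \in B_n(x,\epsilon) \cap Z \cap G_n$. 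Then $B_n(x,\epsilon) \subset B_n(y_x,2\epsilon)$ forces $\mu(B_n(x,\epsilon)) \leq e^{-n(h_\mu-\delta)}$, and the measure-comparison $\mu(Z \cap G_n) \leq \sum_x \mu(B_n(x,\epsilon))$ forces at least $(1-3\gamma/2)e^{n(h_\mu-\delta)}$ such $x$ to exist in $S$. The $\text{Var}(\psi,\epsilon)$-bound then gives $S_n\psi(x) \geq n(\int\psi\,d\mu - \delta - \text{Var}(\psi,\epsilon))$ on each such $x$, and summing yields the desired exponential lower bound on the weighted sum; letting $\delta \to 0$ and $\epsilon \to 0$ completes the lower bound on $\liminf_n \frac{1}{n}\log N^\mu$. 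Combined with the previous paragraph's $\limsup$ bound and the trivial $\liminf \leq \limsup$, the two limits coincide and equal $h_\mu + \int \psi\,d\mu$, handling both versions of the formula simultaneously. The genuine technical subtlety lies in coupling the three parameters $\gamma, \delta, \epsilon$ so that $Z \cap G_n$ retains positive measure while the Brin--Katok bound applies uniformly --- essentially the assertion that distinct centres in a spanning set carry nearly disjoint measure in the $(n,\epsilon)$ metric.
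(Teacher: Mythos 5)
Your argument is correct in substance, but it takes a genuinely different route from the one the paper relies on. The paper does not prove Proposition \ref{theorem4} directly; it cites Mendoza's adaptation of the Misiurewicz proof of the variational principle, which (as the fragmentary sketch in the author's notes indicates) proceeds by choosing a finite Borel partition $\xi = \{C_1,\ldots,C_m\}$ together with a slightly shrunken open cover $\mathcal{U} = \{U_1,\ldots,U_k\}$ with $\overline{U_i}\subset C_i$, tracking the frequency $t_n(x)$ of visits to the thin ``bad'' set $\bigcup_{i>m}U_i$, and combining the Shannon--McMillan--Breiman theorem with Birkhoff to pass between $\xi^n$-atoms and $(n,\epsilon)$-spanning sets. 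Your upper bound is a stripped-down version of the same idea --- one partition of small diameter, no cover, no $t_n$ bookkeeping --- and that simplification is legitimate because for the upper bound you may simply pick one representative per atom of $\xi^n$. Your lower bound, by contrast, substitutes the Brin--Katok local entropy formula for the Misiurewicz combinatorics; this gives the inequality $\sum_{x\in S}\mu(B_n(x,\epsilon))\geq\mu(Z\cap G_n)$ directly and is shorter, at the cost of importing a theorem that is itself roughly as deep as what is being proved (and whose standard proof \emph{is} essentially the Misiurewicz argument). Both routes land on the same estimate and handle the $\liminf$ and $\limsup$ variants simultaneously, as you observe. Two small points to tidy: since $\gamma\in(0,1)$ is arbitrary, take $\mu(G_n)>1-(1-\gamma)/2$ rather than $1-\gamma/2$, so that $\mu(Z\cap G_n)\geq (1-\gamma)/2>0$ for all admissible $\gamma$ rather than only $\gamma<2/3$; and since Brin--Katok for fixed $\epsilon$ gives $\liminf_n -\tfrac{1}{n}\log\mu(B_n(x,2\epsilon))$ equal to some $h(2\epsilon)\leq h_\mu$ rather than $h_\mu$ itself, you should phrase the Egoroff step as producing a uniform bound $\geq h(2\epsilon)-\delta$ on $G_n$, and only recover $h_\mu$ after sending $\epsilon\to 0$ at the end --- your proof already does this implicitly, but the intermediate estimate as literally written ($\mu(B_n(y,2\epsilon))\leq e^{-n(h_\mu-\delta)}$ for fixed $\epsilon$) is slightly too strong.
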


We now begin the proof of theorem \ref{theorem1}. For the sake of clarity, it will be convenient to give the proof under a certain additional hypothesis, which we will later explain how to remove. 
\begin{theorem} \label {theorem5}
Let us assume the hypotheses of theorem \ref{theorem1} and fix $\psi \in C(X)$. Let
\[C := \sup \left \{ h_\mu + \int \psi d \mu : \mu \in \mathcal M_f (X^\prime) \right \}.\] 
Let us assume further that $P^{classic}_X (\psi)$ is finite and for all $\gamma > 0$, there exist ergodic measures $\mu_1, \mu_2 \in \mathcal M_f (X^\prime)$ which satisfy

(1) $h_{\mu_i} + \int \psi d \mu_i > C - \gamma$ for $i = 1, 2$,

(2) $\int \varphi d \mu_1 \neq \int \varphi d\mu_2.$\\
Then $P_{\widehat X_\varphi} (\psi) \geq C$. If $C = P^{classic}_X (\psi)$, for example when $X^\prime = X$, then $P_{\widehat X_\varphi} (\psi) = P^{classic}_X (\psi)$.
\end{theorem}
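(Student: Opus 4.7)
The plan is to adapt the Takens--Verbitskiy construction from \cite{TV} to the pressure setting, replacing the entropy distribution principle by the generalised pressure distribution principle (Proposition \ref{theorem3}). Fix small $\gamma, \epsilon > 0$ and let $\mu_1, \mu_2 \in \mc M^e_f(X^\prime)$ be the ergodic measures provided by hypotheses (1) and (2); set $4\delta := |\int \varphi\, d\mu_1 - \int \varphi\, d\mu_2| > 0$ and fix a metric $D$ on $\mc M_f (X)$ generating the weak-$*$ topology. Combining Birkhoff's theorem (applied to $\varphi$ and to finitely many continuous functions testing closeness to $\mu_i$) with Proposition \ref{theorem4}, I would produce, for each $i \in \{1,2\}$ and every sufficiently large $n$, an $(n, 4\epsilon)$-separated set $\Gamma^{(i)}_n \subset X^\prime$ such that $D(\delta_{y,n}, \mu_i) < \eta_n$ for every $y \in \Gamma^{(i)}_n$ (with $\eta_n \ra 0$) and
\[
\sum_{y \in \Gamma^{(i)}_n} \exp(S_n \psi(y)) \geq \exp\bigl(n(C - 2\gamma)\bigr).
\]

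Next, let $m = m(\epsilon)$ come from the specification property on $X^\prime$, choose alternating indices $i_k \in \{1,2\}$ with $i_k \neq i_{k+1}$, and pick rapidly increasing integers $n_k$ and repetition counts $N_k$ (to be fixed inductively during the proof). I iteratively build a generalised Moran set: at stage $k$, each prefix constructed at stage $k-1$ is extended by all possible tuples $(y^{(k)}_1, \ldots, y^{(k)}_{N_k}) \in (\Gamma^{(i_k)}_{n_k})^{N_k}$, using the specification property on $X^\prime$ to select a single shadowing point in $X$ with gaps of length $m$ between successive pieces. Let $L_k$ denote the total orbit length after stage $k$, and let $F$ be the resulting Cantor-like subset of $X$. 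Define $\sigma_k$ to be the probability measure supported on stage-$k$ representatives, with weight proportional to the product of the exponential weights $\exp(S_{n_j}\psi(y^{(j)}_l))$ over all chosen $y^{(j)}_l$ with $j \leq k$, and let $\nu$ be a weak-$*$ subsequential limit of $(\sigma_k)$.

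Three verifications then remain. \emph{Inclusion} $F \subseteq \widehat X_\varphi$: choosing $N_k$ so large that the new stage dominates the accumulated length $L_{k-1}$, the empirical measure $\delta_{x, L_k}$ of every $x \in F$ lies within $\delta / (1 + \|\varphi\|_\infty)$ of $\mu_{i_k}$ for all large $k$, whence $\frac{1}{L_k} S_{L_k}\varphi(x)$ oscillates between neighbourhoods of $\int \varphi\, d\mu_1$ and $\int \varphi\, d\mu_2$ along the two subsequences of stopping times. \emph{Mass on $F$}: the $(n_k, 4\epsilon)$-separation of $\Gamma^{(i_k)}_{n_k}$ implies that distinct stage-$k$ branches produce representatives that are pairwise $d_{L_k}$-separated by at least $2\epsilon$, and a standard Cantor-set argument then gives $\nu(F) = 1$. \emph{Bowen ball estimate}: for any $B_n(x, \epsilon)$ meeting $F$, a counting argument combining the $(n_k, 4\epsilon)$-separation with the normalisation of $\sigma_k$ produces
\[
\limsup_{k \ra \infty} \sigma_k(B_n(x, \epsilon)) \leq K \exp\bigl(-n(C - 3\gamma) + S_n \psi(x)\bigr)
\]
for a constant $K$ depending only on $\epsilon$, $\|\psi\|_\infty$ and $m$.

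Feeding $\nu$, $(\sigma_k)$ and this estimate into Proposition \ref{theorem3} yields $P_{\widehat X_\varphi}(\psi, \epsilon) \geq P_F(\psi, \epsilon) \geq C - 3\gamma$; letting $\gamma \ra 0$ and then $\epsilon \ra 0$ gives $P_{\widehat X_\varphi}(\psi) \geq C$, and the final equality in the theorem follows from the standard upper bound $P_{\widehat X_\varphi}(\psi) \leq P_X^{classic}(\psi)$ (monotonicity of pressure applied to $\widehat X_\varphi \subseteq X$). The main obstacle I anticipate is the Bowen ball estimate: the scales used for separation and for Bowen balls have to be interleaved carefully (hence the $4\epsilon$ versus $\epsilon$ split), and the growth rate of $N_k$ must simultaneously be large enough to force oscillation of empirical averages along the $L_k$ subsequence and small enough that a single Bowen ball of arbitrary length $n$ does not fan out across too many branches at any one stage of the Moran construction.
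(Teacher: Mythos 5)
Your plan matches the paper's proof in all essential respects: the paper likewise uses Proposition \ref{theorem4} to extract $(n_k, 4\epsilon)$-separated sets with the right exponential weights, builds nested shadowing families $\mc C_k$ and $\mc T_k$ via specification into a Cantor-like set $F$, equips the stage-$k$ representatives with atomic measures weighted by products of $\exp S_{n_i}\psi$ and normalised by $M_1^{N_1}\cdots M_k^{N_k}$, verifies $F\subset\widehat X_\varphi$ together with the Bowen-ball estimate, and then invokes Proposition \ref{theorem3}. The only superficial difference is that the paper's sets $Y_k$ constrain only the Birkhoff $\varphi$-averages (not full weak-$*$ closeness of empirical measures to $\mu_{\rho(k)}$), which is the minimal amount of control needed, and the bookkeeping constants ($\epsilon/2$ balls, an explicit $2\,\mbox{Var}(\psi,2\epsilon)$ term) are tracked slightly more carefully.
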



The assumption that $P^{classic}_X (\psi)$ is finite is trivial to remove and is included only for notational convenience. Given a result from \cite{EKW}, we give a short proof that the hypotheses of theorem \ref{theorem0.1} imply those of theorem \ref{theorem5} when the map $\mu \ra h_\mu$ is upper semi-continuous. We explain how to modify the proof of theorem \ref{theorem5} to obtain a self contained proof of theorem \ref{theorem1} in \S \ref{modif}.
\begin{proof} [Proof of theorem \ref {theorem0.1}.]
Let $\mu_1$ be ergodic and satisfy $h_{\mu_1} + \int \psi d \mu_1 > C - \gamma /3$, Let $\nu \in \mc M_f (X)$ satisfy $\int \varphi d {\mu_1} \neq \int \varphi d \nu$. Let $\nu^\prime = t \mu_1 + (1-t) \nu$ where $t \in (0,1)$ is chosen sufficiently close to $1$ so that $h_{\nu^\prime} + \int \psi d \nu^\prime > C - 2 \gamma /3$. By theorem B of \cite{EKW}, when $f$ has the specification property and the map $\mu \ra h_\mu$ is upper semi-continuous, we can find a sequence of ergodic measures $\nu_n \in \mc M_f (X)$ such that $h_{\nu_n} \ra h_{\nu^\prime}$ and $\nu_n \ra \nu^\prime$ in the weak-$\ast$ topology. Therefore, we can choose a measure belonging to this sequence which we call $\mu_2$ which satisfies $h_{\mu_2} + \int \psi d \mu_2 > C - \gamma$ and $\int \varphi d \mu_1 \neq \int \varphi d\mu_2.$
\end{proof}

\section{Proof of the main theorem \ref {theorem5}}
Let us fix a small $\gamma > 0$, and take the measures $\mu_1$ and $\mu_2$ provided by our hypothesis. Choose $\delta > 0$ sufficiently small so
\[
\left | \int \varphi d \mu_1 - \int \varphi d\mu_2 \right | > 4 \delta.
\]
Let $\rho : \IN \mapsto \{1, 2\}$ be given by $\rho(k) = 1 + k$ $ (mod 1)$. Choose a strictly decreasing sequence $\delta_k \ra 0$ with $\delta_1 < \delta$ and a strictly increasing sequence $l_k \ra \infty$ so the set
\be \label{5}
Y_k := \left \{ x \in X^\prime : \left | \frac{1}{n} S_n \varphi (x) - \int \varphi d\mu_{\rho(k)} \right | < \delta_k \mbox{ for all } n \geq l_k\right \}
\ee
satisfies $\mu_{\rho(k)} (Y_k) > 1 - \gamma$ for every $k$. This is possible by Birkhoff's ergodic theorem. 

The following lemma follows readily from proposition \ref{theorem4}. 
\begin{lemma} \label {lemma1}
For any sufficiently small $\epsilon > 0$, we can find a sequence $n_k \ra \infty$ and a countable collection of finite sets $\mc S_k$ so that each $\mc S_k$ is an $(n_k, 4 \epsilon)$ separated set for $Y_k$ and $M_k := \sum_{x \in \mc S_k} \exp \left \{ \sum_{i=0}^{n_k -1} \psi(f^{i}x)\right \}$ satisfies
\[
M_k \geq \exp( n_k (C - 4 \gamma)).
\]
Furthermore, the sequence $n_k$ can be chosen so that $n_k \geq l_k$ and $n_k \geq 2^{m_{k}}$, where $m_k = m(\epsilon/2^k)$ is as in definition \ref{3a.3} of the specification property.
\end{lemma}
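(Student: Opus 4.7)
The plan is to deduce the lemma from the Katok--Mendoza formula (Proposition~\ref{theorem4}), applied separately to each of the ergodic measures $\mu_1, \mu_2$, together with the standard observation that a maximal $(n, 4\epsilon)$-separated subset of a set $Z$ is automatically $(n, 4\epsilon)$-spanning for $Z$. Since $h_{\mu_i} + \int \psi\,d\mu_i > C - \gamma$ for $i = 1, 2$, Proposition~\ref{theorem4} guarantees that for all sufficiently small $\epsilon > 0$,
\[
\liminf_{n \to \infty} \frac{1}{n} \log N^{\mu_i}(\psi, \gamma, 4\epsilon, n) \;>\; h_{\mu_i} + \int \psi\,d\mu_i - \gamma \;>\; C - 2\gamma.
\]
I fix such an $\epsilon$, which in turn fixes the constants $m_k = m(\epsilon/2^k)$.

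For each $k$, the liminf inequality above permits choosing $n_k$ arbitrarily large while still enforcing $\frac{1}{n_k}\log N^{\mu_{\rho(k)}}(\psi, \gamma, 4\epsilon, n_k) \geq C - 3\gamma$. I may simultaneously arrange $n_k \geq \max(l_k, 2^{m_k})$ and $n_k \to \infty$, so the auxiliary size conditions are handled. With such $n_k$ chosen, I pick a maximal $(n_k, 4\epsilon)$-separated subset $\mathcal{S}_k \subset Y_k$. By maximality, $\mathcal{S}_k$ is $(n_k, 4\epsilon)$-spanning for $Y_k$; and since $\mu_{\rho(k)}(Y_k) > 1 - \gamma$, the pair $(Y_k, \mathcal{S}_k)$ is an admissible candidate in the infimum defining $N^{\mu_{\rho(k)}}(\psi, \gamma, 4\epsilon, n_k)$. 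Consequently
\[
M_k \;\geq\; N^{\mu_{\rho(k)}}(\psi, \gamma, 4\epsilon, n_k) \;\geq\; \exp\bigl(n_k(C - 3\gamma)\bigr) \;\geq\; \exp\bigl(n_k(C - 4\gamma)\bigr).
\]

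There is no deep obstacle: the statement is essentially a bookkeeping corollary of Proposition~\ref{theorem4}, once the Birkhoff-regular sets $Y_k$ from (\ref{5}) are in hand. The only points needing care are the consistent choice of scale (working at $4\epsilon$ both for the separated sets and inside $N^{\mu_{\rho(k)}}$) and the simultaneous enforcement of $n_k \geq l_k$ and $n_k \geq 2^{m_k}$, both of which are immediate because the liminf inequality supplies arbitrarily large admissible $n_k$ at each stage. The extra margin between $C - 3\gamma$ and $C - 4\gamma$ in the final estimate is deliberate slack absorbed to keep the statement clean.
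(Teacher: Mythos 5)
Your proof is correct and follows the paper's overall plan: apply Proposition~\ref{theorem4} to $\mu_1, \mu_2$ to get $\liminf_n \frac{1}{n}\log N^{\mu_i}(\psi,\gamma,4\epsilon,n) \geq C-2\gamma$ for small $\epsilon$, then choose separated sets inside the Birkhoff-regular sets $Y_k$ at scales $n_k$ large enough to meet the side constraints. The one place where your derivation genuinely differs is the selection of $\mathcal{S}_k$. The paper defines $Q_n$ (infimum over spanning sets) and $P_n$ (supremum over separated sets), runs the chain $N^{\mu_{\rho(k)}} \leq Q_n(Y_k) \leq P_n(Y_k) =: M(k,n)$, picks $n_k$ with $M(k,n_k) \geq e^{n_k(C-3\gamma)}$, and then takes $\mathcal{S}_k$ to be a near-weight-maximizer of $P_{n_k}$, losing a further factor $e^{-n_k\gamma}$. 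You instead take $\mathcal{S}_k$ to be any maximal $(n_k,4\epsilon)$-separated subset of $Y_k$, observe that maximality makes it $(n_k,4\epsilon)$-spanning for $Y_k$, and hence that $(\mathcal{S}_k, Y_k)$ is directly admissible in the infimum defining $N^{\mu_{\rho(k)}}$, giving $M_k \geq N^{\mu_{\rho(k)}}(\psi,\gamma,4\epsilon,n_k) \geq e^{n_k(C-3\gamma)}$ with no further loss. Your route bypasses $P_n$ and the near-maximization step entirely, is a touch shorter, and in fact lands on the sharper constant $C-3\gamma$; the extra $\gamma$ the paper sheds is just harmless slack. One cosmetic remark: Proposition~\ref{theorem4} gives $\liminf_n \frac{1}{n}\log N^{\mu_i} \geq h_{\mu_i}+\int\psi\,d\mu_i - \gamma$ with a non-strict inequality, not strict as you wrote, but since the hypothesis $h_{\mu_i}+\int\psi\,d\mu_i > C-\gamma$ is strict, the chain still delivers $\liminf > C - 2\gamma$, so nothing breaks.
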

\begin{proof}
By proposition \ref {theorem4}, let us choose $\epsilon$ sufficiently small so
\[
\liminf_{n \ra \infty} \frac{1}{n} \log N^{\mu_i} (\psi, \gamma, 4 \epsilon, n) \geq  h_{\mu_i} + \int \psi d \mu_i - \gamma \geq C -2 \gamma \mbox{ for } i = 1, 2.
\]
For $A \subset X$, let
\[
Q_n (A, \psi, \epsilon) = \inf \left \{ \sum_{x \in S} \exp \left \{ \sum_{k=0}^{n -1} \psi(f^{k}x) \right \}: S \mbox{ is (n, $\epsilon$) spanning set for } A \right \}, 
\]
\[
P_n (A, \psi, \epsilon) = \sup \left \{ \sum_{x \in S} \exp \left \{ \sum_{k=0}^{n -1} \psi(f^{k}x) \right \}: S \mbox{ is (n, $\epsilon$) separated set for } A \right \}. 
\]
We have $Q_n (A, \psi, \epsilon) \leq P_n (A, \psi, \epsilon)$ and since $\mu_{\rho(k)} (Y_k) > 1 - \gamma$ for every $k$, it is immediate that
\[
Q_n(Y_k, \psi, 4 \epsilon) \geq N^{\mu_{\rho(k)}} (\psi, \gamma, \epsilon, n).
\]
Let $M(k,n) = P_n(Y_k, \psi, 4 \epsilon)$. For each $k$, we obtain
\[
\liminf_{n \ra \infty} \frac{1}{n} \log M (k, n) \geq \liminf_{n \ra \infty} \frac{1}{n} \log N^{\mu_{\rho(k)}} (\psi, \gamma, 4 \epsilon, n) \geq C -2 \gamma.
\]
We may now choose a sequence $n_k \ra \infty$ satisfying the hypotheses of the lemma so 
\[
\frac{1}{n_k} \log M(k, n_k) \geq C - 3 \gamma.
\]
Now for eack $k$, let $\mc S_k$ be a choice of $(n_k, 4 \epsilon)$ separated set for $Y_k$ which satisfies 
\[
\frac{1}{n_k} \log \left \{\sum_{x \in \mc S_k} \exp \left \{ \sum_{i=0}^{n -1} \psi(f^{i}x) \right \} \right \} \geq \frac{1}{n_k} \log M(k, n_k) - \gamma.
\]
Let $M_k := \sum_{x \in \mc S_k} \exp \left \{ \sum_{i=0}^{n -1} \psi(f^{i}x) \right \}$, then
\[
\frac{1}{n_k} \log M_k \geq \frac{1}{n_k} \log M(k, n_k) - \gamma \geq C - 4 \gamma.
\]
We rearrange to obtain the desired result.
\end{proof}
We choose $\epsilon$ sufficiently small so that $Var(\psi, 2 \epsilon) < \gamma$ and $Var(\varphi, 2 \epsilon) < \delta$, and fix all the ingredients provided by lemma \ref{lemma1}.

Our strategy is to construct a certain fractal $F \subset \IX_\varphi$, on which we can define a sequence of measures suitable for an application of the generalised pressure distribution principle. 

\subsection{Construction of the fractal F}
We begin by constructing two intermediate families of finite sets. The first such family we denote by $\{ \mathcal C_k \}_{k \in \IN}$ and consists of points which shadow a very large number $N_k$ of points from $\mc S_k$. The second family we denote by $\{ \Tk\}_{k \in \IN}$  and  consist of points which shadow points (taken in order) from ${\mathcal C _1}, {\mathcal C _2}, \ldots, {\mathcal C _k}$. We choose $N_k$ to grow to infinity very quickly, so the ergodic average of a point in $\Tk$ is close to the corresponding point in $\Ck$.

\subsubsection{Construction of the intermediate sets $\{ \Ck \}_{k \in \IN}$} \label{f}
Let us choose a sequence $N_k$ which increases to $\infty$ sufficiently quickly so that
\begin{equation} \label{f.1}
\limk \frac{n_{k+1} + m_{k+1}}{N_k} = 0,  \limk \frac{N_1 (n_1 + m_1) + \ldots + N_k (n_k + m_k)}{N_{k+1}} = 0.
\end{equation}
We enumerate the points in the sets $\mc S_k$ provided by lemma \ref{lemma1} and write them as follows
\[
\mathcal S_k = \{ x_i^k : i = 1, 2, \ldots, \# \mc S_k \}.
\]
Let us make a choice of $k$ and consider the set of words of length $N_k$ with entries in $\{ 1, 2, \ldots, \# \mc S_k\}$. Each such word $ \ul i = (i_1, \ldots, i_{N_k} )$ represents a point in $\mathcal S_k ^{N_k}$. Using the specification property, 
 we can choose a point $y:= y(i_1, \ldots, i_{N_k} )$ which satisfies
\[
d_{n_k}(x_{i_j}^k, f^{a_j} y) < \frac{\epsilon}{2^k}
\]
for all $j \in \{1, \ldots, N_k \}$ where $a_j = (j-1)(n_k +m_k)$. (i.e. $y$ shadows each of the points $x_{i_j}^k$ in order for length $n_k$ and gap $m_k$.) We define
\[
\Ck = \left \{ y(i_1, \ldots, i_{N_k}) \in X : (i_1, \ldots, i_{N_k}) \in \{1, \ldots, \# \mc S_k \}^{N_k} \right \}.
\]
Let $c_k = N_k n_k + (N_k - 1) m_k$. Then $c_k$ is the amount of time for which the orbit of points in $\Ck$ has been prescribed. It is a corollary of the following lemma that distinct sequences $(i_1, \ldots, i_{N_k} )$ give rise to distinct points in $\Ck$. Thus the cardinality of $\Ck$, which we shall denote by $\# \Ck$, is $\#S_k^{N_k}$.
\bl \label{lemma2}
Let $\ul i$ and $\ul j$ be distinct words in $\{ 1, 2, \ldots M_k \}^{N_k}$. Then $y_1 := y ( \ul i )$ and $y_2 := y ( \ul j )$ are $(c_k, 3 \epsilon)$ separated points (ie. $d_{c_k} ( y_1, y_2 ) > 3 \epsilon$).
\el
\begin{proof} 
Since $\ul i \neq \ul j$, there exists $l$ so $i_l \neq j_l$. We have
\[
d_{n_k} (x_{i_l}^k, f^{a_l} y_1 ) < \frac{\epsilon}{2^k}, d_{n_k} (x_{j_l}^k, f^{a_l} y_2 ) < \frac{\epsilon}{2^k} \mbox{ and } d_{n_k} (x_{i_l}^k, x_{j_l}^k ) > 4 \epsilon.
\] 
Combining these inequalities, we have
\begin{eqnarray*}
d_{c_k} (y_1, y_2) & \geq & d_{n_k} (f^{a_l} y_1, f^{a_l} y_2 ) \\
	      & \geq &  d_{n_k} (x_{i_l}^k, x_{j_l}^k ) - d_{n_k} (x_{i_l}^k, f^{a_l} y_1 ) - d_{n_k} (x_{j_l}^k, f^{a_l} y_2 ) \\ & > & 4 \epsilon - \epsilon /2 - \epsilon /2 = 3 \epsilon.
\end{eqnarray*}
\end{proof}
\subsubsection{Construction of the intermediate sets $\{ \Tk \}_{k \in \IN}$} \label{g}
We use the specification property to construct points whose orbits shadow points (taken in order) from ${\mathcal C _1}, {\mathcal C _2}, \ldots, {\mathcal C _k}$. Formally, we define $\Tk$ inductively. Let $\mathcal T _1 = \mathcal C _1$. We construct $\mc T_{k+1}$ from $\mc T_{k}$ as follows. Let $x \in \Tk$ and $y \in \mc C_{k+1}$. Let $t_1 = c_1$ and $t_{k+1} = t_k + m_{k+1} + c_{k+1}$. Using specification, we can find a point $z := z(x, y)$ which satisfies
\[
d_{t_k}(x, z) < \frac{\epsilon}{2^{k+1}} \mbox{ and } d_{c_{k+1}}(y, f^{t_k +m_{k+1}}z) <\frac{\epsilon}{2^{k+1}} .
\]
Define $\mc T_{k+1} = \{ z(x, y) : x \in \Tk, y \in \mc C_{k+1} \}$.
Note that $t_k$ is the amount of time for which the orbit of points in $\Tk$ has been prescribed. Once again, points constructed in this way are distinct. So we have
\[
\# \Tk = \# \mc C_1 \ldots  \# \mc C_k= \#S_1^{N_1} \ldots \# S_k^{N_k}.
\]
This fact is a corollary of the following straight forward lemma:
\begin{lemma} \label{lemma3}
For every $x \in \Tk$ and distinct $y_1, y_2 \in \mc C_{k+1}$
\[
d_{t_k} (z(x,y_1), z(x, y_2)) < \frac{\epsilon}{2^k} \mbox{ and } d_{t_{k+1}} (z(x,y_1), z(x, y_2)) > 2 \epsilon.
\]
Thus $\Tk$ is a $(t_k, 2 \epsilon)$ separated set. In particular, if $z, z' \in \Tk$, then \[
\overline B_{t_{k}} (z, \frac{\epsilon}{2^k}) \cap \overline B_{t_{k}} (z', \frac{\epsilon}{2^k}) = \emptyset.\]
\end{lemma}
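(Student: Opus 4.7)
The plan is to establish the two inequalities directly from the construction, then upgrade these to the separation statement by an induction on $k$, and finally deduce the disjoint ball statement as a routine consequence.

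First, for the inequality $d_{t_k}(z(x,y_1),z(x,y_2)) < \epsilon/2^k$: by construction both $z(x,y_1)$ and $z(x,y_2)$ were chosen to $\epsilon/2^{k+1}$-shadow the common point $x$ for the first $t_k$ iterates. The triangle inequality in the $d_{t_k}$ metric immediately yields the bound $2\cdot\epsilon/2^{k+1} = \epsilon/2^k$. For the second inequality, Lemma \ref{lemma2} says that distinct $y_1,y_2\in\mathcal C_{k+1}$ are $(c_{k+1},3\epsilon)$ separated. The construction of $z(x,y_i)$ guarantees that $f^{t_k+m_{k+1}}z(x,y_i)$ lies within $\epsilon/2^{k+1}$ of $y_i$ in the $d_{c_{k+1}}$ metric, so the triangle inequality gives
\[
d_{t_{k+1}}(z(x,y_1),z(x,y_2))\; \geq\; d_{c_{k+1}}(y_1,y_2)\;-\;2\cdot\frac{\epsilon}{2^{k+1}}\;>\;3\epsilon-\frac{\epsilon}{2^k}\;>\;2\epsilon,
\]
the last inequality using $k\geq 1$.

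Next, I would prove by induction on $k$ that $\Tk$ is $(t_k,\epsilon_k)$ separated where $\epsilon_k := 3\epsilon - \sum_{j=1}^{k-1}\epsilon/2^j$. This is strictly greater than $2\epsilon$ for every $k$, since the geometric series is bounded by $\epsilon$. The base case $\mathcal T_1=\mathcal C_1$ is Lemma \ref{lemma2}. For the inductive step, take distinct $z_1,z_2\in\Tkk$ and write $z_i=z(x_i,y_i)$ with $x_i\in\Tk$, $y_i\in\mathcal C_{k+1}$. If $x_1\neq x_2$, then both $z_i$ shadow $x_i$ within $\epsilon/2^{k+1}$ over the interval of length $t_k$, so
\[
d_{t_{k+1}}(z_1,z_2)\;\geq\; d_{t_k}(x_1,x_2)\;-\;\frac{\epsilon}{2^k}\;>\;\epsilon_k-\frac{\epsilon}{2^k}\;=\;\epsilon_{k+1}.
\]
If $x_1=x_2$ but $y_1\neq y_2$, the second inequality proved above gives $d_{t_{k+1}}(z_1,z_2)>2\epsilon>\epsilon_{k+1}$. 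In either case the separation by $\epsilon_{k+1}$ holds, completing the induction, and in particular $\Tk$ is $(t_k,2\epsilon)$ separated.

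Finally, the disjoint ball statement is immediate: if some $w$ lay in $\overline B_{t_k}(z,\epsilon/2^k)\cap\overline B_{t_k}(z',\epsilon/2^k)$ for distinct $z,z'\in\Tk$, the triangle inequality would force $d_{t_k}(z,z')\leq \epsilon/2^{k-1}\leq\epsilon<2\epsilon$, contradicting the separation just established. The main point requiring care is the induction: because each passage from level $k$ to level $k+1$ costs an error $\epsilon/2^k$ due to shadowing, one must verify that the accumulated errors do not drop the separation constant to or below $2\epsilon$. The choice of the shadowing scale $\epsilon/2^{k+1}$ at the $k$-th stage of the construction is exactly what makes this geometric series summable to strictly less than $\epsilon$, leaving a margin of safety above $2\epsilon$ at every level.
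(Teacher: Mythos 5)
Your two displayed inequalities are established by exactly the triangle-inequality calculations the paper uses, so that part is fine. Where you go beyond the paper is in the passage from the two inequalities to the claim that $\Tk$ is $(t_k,2\epsilon)$ separated: the paper simply says ``Thus,'' but the second displayed inequality only handles pairs in $\Tkk$ descending from the \emph{same} $x\in\Tk$. The cross-$x$ case (different parents $x_1\neq x_2$) requires an induction, and a naive induction with the fixed separation constant $2\epsilon$ does not close because each level costs a shadowing error of $\epsilon/2^k$. Your idea of proving $(t_k,\epsilon_k)$ separation with $\epsilon_k=3\epsilon-\sum_{j=1}^{k-1}\epsilon/2^j$ is exactly the right way to make this step rigorous, and in that sense your proof is more careful than the paper's.

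However, there is an arithmetic slip in the same-parent branch of your induction. You write $d_{t_{k+1}}(z_1,z_2)>2\epsilon>\epsilon_{k+1}$, but as you observed yourself one paragraph earlier, $\epsilon_{k+1}=3\epsilon-\sum_{j=1}^{k}\epsilon/2^j$ is \emph{strictly greater than} $2\epsilon$, so the inequality $2\epsilon>\epsilon_{k+1}$ is false and this branch fails to produce $\epsilon_{k+1}$-separation. The fix is to retain the tight estimate you actually derived rather than loosening it: you showed $d_{t_{k+1}}(z_1,z_2)>3\epsilon-\epsilon/2^k$, and since $\sum_{j=1}^k\epsilon/2^j\geq\epsilon/2^k$ we have $3\epsilon-\epsilon/2^k\geq\epsilon_{k+1}$, which closes the induction. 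With that one-line correction the argument is complete, and the disjoint-ball consequence follows exactly as you say.
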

\begin{proof} 
Let $p := z(x, y_1)$ and $q := z(x, y_2)$. The first inequality is trivial since by construction, $d_{t_k}(x, z_i) < \epsilon /2^{k+1}$ for $ i = 1,2$. 

Using lemma \ref {lemma2}, we obtain the second inequality as follows:
\begin{eqnarray*}
d_{t_{k+1}} (p, q) & \geq & d_{c_{k+1}} (f^{t_k + m_{k+1}} p, f^{t_k + m_{k+1}} q ) \\ & \geq &  d_{c_{k+1}} (y_1, y_2 ) - d_{c_{k+1}} (y_1, f^{t_k + m_{k+1}} p ) - d_{c_{k+1}} (y_2, f^{t_k + m_{k+1}} q ) \\ & > & 3\epsilon - \epsilon /2 - \epsilon /2 = 2 \epsilon.
\end{eqnarray*}
The third statement is a straightforward consequence of the second.
\end{proof}
Following the terminology of Takens and Verbitskiy, we say $z \in \Tkk$ descends from $x \in \Tk$ if $z = z(x, y)$ for some $y \in \Ckk$.
\bl \label{lemma4}
If $z \in \Tkk$ descends from $x \in \Tk$ then
\[
\overline B_{t_{k+1}} (z, \frac{\epsilon}{2^k}) \subset \overline B_{t_{k}} (x, \frac{\epsilon}{2^{k-1}}).
\]
\el
\begin{proof}
Let $z' \in \overline B_{t_{k+1}} (z, \frac{\epsilon}{2^k})$. Then
\begin{eqnarray*}
d_{t_{k}} (z', x) & \leq & d_{t_{k+1}} (z', z) + d_{t_{k}} (z, x)\\
	      & \leq &  \epsilon /2^k + \epsilon /2^{k+1} \leq \epsilon /2^{k-1}.
\end{eqnarray*} 
\end{proof}
\subsubsection{Construction of the fractal $F$ and a special sequence of measures $\mu_k$} \label{ss}
Let $F_k = \bigcup_{x \in \Tk} \overline B_{t_k} (x, \frac{\epsilon}{2^{k-1}})$. By lemma \ref{lemma4}, $F_{k+1} \subset F_k$. Since we have a decreasing sequence of connected compact sets, the intersection $F = \bigcap_k F_k$ is non-empty. Further, every point $p \in F$ can be uniquely represented by a sequence $\underline p = (\ul p_1, \ul p_2, \ul p_3,. \ldots )$ where each $\ul p_i = (p_1^i, \ldots, p_{N_i}^i) \in \{ 1, 2, \ldots M_i \}^{N_i}$. Each point in $\Tk$ can be uniquely represented by a finite word $(\ul p_1, \ldots \ul p_k)$. We introduce some useful notation to help us see this. Let $y(\ul p_i) \in \mc C_i$ be defined as in \ref{f}. Let $z_1 (\ul p ) = y(\ul p_1)$ and proceeding inductively, let $z_{i+1} ( \ul p) = z (z_{i} ( \ul p), y(\ul p_{i+1})) \in \mc T _{i+1}$ be defined as in \ref{g}. We can also write $z_i (\ul p)$ as $z(\ul p_1, \ldots, \ul p_i)$.  Then define $p := \pi \ul p$ by
\[
p = \bigcap_{i\in \IN} \overline B_{t_{i}} (z_{i} ( \ul p), \frac{\epsilon}{2^{i-1}}).
\]
It is clear from our construction that we can uniquely represent every point in $F$ in this way.
\bl \label{dist}
Given $z = z(\ul p_1, \ldots, \ul p_k) \in \Tk$, we have for all $i \in \{1, \ldots, k\}$ and all $l \in \{1, \ldots, N_i\}$,
\[
d_{n_i}(x^i_{p^i_{l}}, f^{t_{i-1} + m_{i-1} + (l-1)(m_i + n_i)}z) < 2 \epsilon.
\]
\el
\bp
We fix $i \in \{1, \ldots, k\}$ and $l \in \{1, \ldots, N_i\}$. For $m \in \{1, \ldots, k-1\}$, let $z_m = z(\ul p_1, \ldots, \ul p_m) \in \mc T_m$. Let $a = t_{i-1} + m_{i-1}$ and $b = (l-1)(m_i + n_i)$. Then
\[
d_{n_i} (x^i_{p^i_l}, f^{a+b} z) < d_{n_i} (x^i_{p^i_l}, f^b y(\ul p_i)) + d_{n_i} (f^b y(\ul p_i), f^{a+b} z_i) + d_{n_i} (f^{a+b} z_i, f^{a+b} z).
\]
We have, by construction,
\[
d_{n_i} (x^i_{p^i_l}, f^{b} y(\ul p_i) ) < \frac{\epsilon}{2^i}.
\]
We have, by construction,
\[
d_{n_i} (f^b y(\ul p_i), f^{a+b} z_i) \leq d_{c_i} (y(\ul p_i), f^a z) < \frac{\epsilon}{2^{i+1}}.
\]
We have
\beq
d_{n_i} (f^{a+b} z_i, f^{a+b} z) < d_{t_i} (z_i, z) &<& d_{t_i} (z_i, z_{i+1}) + \ldots + d_{t_i} (z_{k-1}, z)\\
&<& \frac{\epsilon}{2^{i+1}}+ \frac{\epsilon}{2^{i+2}} +\ldots + \frac{\epsilon}{2^{k}}.
\eeq
Combining the inequalities, we obtain $d_{n_i} (f^{a+b} z, x^i_{p^i_l}) < \sum_{m=i}^{k} \frac{\epsilon}{2^{m}} + \frac{\epsilon}{2^{i+1}} <2 \epsilon$, as required.
\ep
We now define the measures on $F$ which yield the required estimates for the Pressure Distribution Principle. For each $z \in \Tk$, we associate a number $\mc L (z) \in (0, \infty)$. Using these mumbers as weights, we define, for each $k$, an atomic measure centred on $\Tk$. Precisely, if $z = z(\ul p_1, \ldots \ul p_k)$, we define
\[
\mc L(z) := \mc L(\ul p_1) \ldots \mc L(\ul p_k),
\]
where if $\ul p_i = (p^i_{1}, \ldots, p^i_{N_i})\in \{1, \ldots, \# \mc S_i \}^{N_i}$, then
\[
\mc L(\ul p_i) := \prod_{l=1}^{N_i} \exp S_{n_i} \psi (x^i_{p^i_l}). 
\]
We define
\[
\nu_k := \sum_{z \in \Tk}\delta_z \mc L(z). 
\]
We normalise $\nu_k$ to obtain a sequence of probability measures $\mu_k$. More precisely, we let $\mu_k := \frac{1}{\kappa_k} \nu_k$, where $\kappa_k$ is the normalising constant
\[
\kappa_k := \sum_{z \in \Tk} \mc L_k (z).
\]
\bl
$\kappa_k = M_1^{N_1} \ldots M_k^{N_k}$.
\el
\bp
We note that
\beq
\sum_{\ul p_i \in \{1, \ldots, \# \mc S_i \}^{N_i}} \mc L (\ul p_i) &=& \sum_{p^i_1 = 1}^{\# \mc S_i} \exp S_{n_i} \psi (x^i_{p^1_l}) \ldots \sum_{p^i_{N_i} = 1}^{\# \mc S_i} \exp S_{n_i} \psi (x^i_{p^i_{N_i}})\\ 
&=& M_i^{N_i}
\eeq
By the definition and since each $z \in \Tk$ corresponds uniquely to a sequence $(\ul p_1, \ldots, \ul p_k)$, we have
\[
\sum_{z \in \mc T_k} \mc L_k (z) = \sum_{\ul p_1 \in \{1, \ldots, \# \mc S_1 \}^{N_1}} \ldots \sum_{\ul p_k \in \{1, \ldots, \# \mc S_k \}^{N_k}} \mc L (\ul p_1) \ldots \mc L (\ul p_k).
\]
The result follows.
\ep
\bl \label{lemma5}
Suppose $\nu$ is a limit measure of the sequence of probability measures $\mu_k$. 
Then $\nu (F) = 1$.
\bp
Suppose $\nu$ is a limit measure of the sequence of probability measures $\mu_k$. Then $\nu = \lim_{k \ra \infty} \mu_{l_k}$ for some $l_k \ra \infty$.
For any fixed $l$ and all $p \geq 0$, $\mu_{l+p} (F_{l}) = 1$ since $\mu_{l+p} (F_{l+p}) = 1$ and $F_{l+p} \subseteq F_{l}$. Therefore, $\nu(F_l) \geq \limsup_{k \ra \infty} \mu_{l_k} (F_l) = 1$. It follows that $\nu (F) = \lim_{l \ra \infty} \nu (F_{l}) = 1$.
\ep
\el
In fact, the measures $\mu_k$ converge. However, by using the generalised pressure distribution principle, we do not need to use this fact and so we omit the proof (which goes like lemma 5.4 of \cite{TV}).

We verify that $F \subset \widehat X_\varphi$.
\bl \label{lemma6}
For any $p \in F$, the sequence $\frac{1}{t_k} \sum_{i = 0}^{t_k-1} \varphi (f^i (p))$ diverges. 
\el 
\begin{proof}
Let us choose a point $p \in F$. Using the notation of \ref{ss}, let $y_k := y( \ul p_k)$ and $z_k = z_k (\ul p)$. We first show that
\begin{equation} \label{i}
\left | \frac{1}{c_k} S_{c_k} \varphi (y_k) - \int \varphi d\mu_{\rho(k)} \right | \ra 0.
\end{equation}
We rely on the fact that $\mbox{Var}( \varphi, c) \ra 0$ as $c \ra 0$ and that
\be \label{j}
\limk \frac{n_k N_k}{c_k} = 1, \limk \frac{m_k (N_k - 1)}{c_k} = 0 \mbox{ and } \limk \delta_k = 0.
\ee
The first two limits follow from the assumption that $n_k \geq 2^{m_k}$. Let $a_j = (j-1)(n_k+m_k)$. We have
\beq
\left | S_{c_k} \varphi (y_k) - c_k \int \varphi d\mu_{\rho(k)} \right | 
 &\leq& \left | \sum_{j = 1}^{N_k} S_{n_k} \varphi (f^{a_j} y_k) - c_k \int \varphi d\mu_{\rho(k)} \right |\\ &+& m_k (N_k - 1) \| \varphi \|
\eeq
\beq
\leq \left | \sum_{j = 1}^{N_k} S_{n_k} \varphi (f^{a_j} y_k) - \sum_{j = 1}^{N_k} S_{n_k} \varphi (x_{i_j}^k) \right | &+& \left | \sum_{j = 1}^{N_k} S_{n_k} \varphi (x_{i_j}^k) - c_k \int \varphi d\mu_{\rho(k)} \right | \\&+& m_k (N_k - 1) \| \varphi \|
\eeq
\beq
\leq \sum_{j = 1}^{N_k} \left | S_{n_k} \varphi (f^{a_j} y_k) - S_{n_k} \varphi (x_{i_j}^k) \right | &+&  \sum_{j = 1}^{N_k} \left | S_{n_k} \varphi (x^k_{i_j}) - n_k \int \varphi d\mu_{\rho(k)} \right | \\&+& m_k (N_k - 1) \{ \| \varphi \| +\int \varphi d\mu_{\rho(k)} \}
\eeq
\beq \label{k}
\leq n_k N_k \{ \mbox{Var}( \varphi, \epsilon/2^k) +  \delta_k \} + m_k (N_k - 1) \{ \| \varphi \| + \int \varphi d\mu_{\rho(k)} \}.
\eeq
We have used the fact $d_{n_k}(x_{i_j}^k, f^{a_j} y_k) < \epsilon /2^k$ in the last line. The statement of (\ref{i}) follows from this and (\ref{j}). 

Let $p' = f^{t_k-c_k} p$ and $z'_k = f^{t_k-c_k} z_k$. Using $d_{t_k} (p, z_k) \leq \epsilon / 2^{k-1}$, we have 
\begin{eqnarray*}
d_{c_{k}} (p', y_k) & \leq & d_{c_k} (p', z_k') + d_{c_k} (z_k', y_k)\\
	      & \leq &  \epsilon /2^{k-1} + \epsilon /2^{k} \leq \epsilon /2^{k-2}.
\end{eqnarray*}
Using this and (\ref{i}), we obtain
\begin{equation} \label{m}
\left | \frac{1}{c_k} S_{c_k} \varphi (p') - \int \varphi d\mu_{\rho(k)} \right | \leq \mbox{Var}( \varphi, \epsilon/2^{k-2}).
\end{equation}
The final ingredient we require is to show that
\be \label{n}
\left | \frac{1}{t_k} S_{t_k} \varphi (p) - \frac{1}{c_k} S_{c_k} \varphi (p') \right | \ra 0.
\ee
From the assumptions of (\ref{f.1}), we can verify that $c_k/t_k \ra 1$. Thus for arbitrary $\gamma > 0$ and sufficiently large $k$, we have $| c_k/t_k - 1 | < \gamma$. We have
\beq
\left | \frac{1}{t_k} S_{t_k} \varphi (p) - \frac{1}{c_k} S_{c_k} \varphi (p') \right | & = &\left | \frac{1}{t_k} S_{t_k - c_k} \varphi (p) + \frac{1}{c_k} S_{c_k} \varphi (p')\left(\frac{c_k}{t_k} - 1\right) \right |\\
& \leq & \left | \frac{t_k - c_k}{t_k} \| \varphi \| + \gamma \frac{1}{c_k} S_{c_k} \varphi (p') \right |\\ 
& \leq & 2 \gamma \| \varphi \| 
\eeq
Since $\gamma$ was arbitrary, we have verified (\ref{n}). Using (\ref{m}) and (\ref{n}), it follows that $\left | \frac{1}{t_k} S_{t_k} \varphi (p) - \int \varphi d\mu_{\rho(k)} \right | \ra 0$.
\end{proof}
For an affirmative answer to theorem \ref{theorem5}, we give a sequence of lemmas which will allow us to apply the generalised pressure distribution principle.
Let $\mc B := B_n (q, \epsilon /2)$ be an arbitrary ball which intersects $F$. Let $k$ be the unique number which satisfies $t_k \leq n < t_{k+1}$. Let $j \in \{0, \ldots, N_{k+1} -1 \}$ be the unique number so
\[ 
t_k + (n_{k+1} + m_{k+1})j \leq n < t_k + (n_{k+1} + m_{k+1})(j+1).
\]
We assume that $j \geq 1$ and leave the details of the simpler case $j=0$ to the reader.
\bl \label{lemma6.11}
Suppose $\mu_{k+1} (\mc B) > 0$, then there exists (a unique choice of) $x \in \Tk$ and $i_1, \ldots, i_j \in \{1, \ldots, \# \mc S_{k+1} \}$ satisfying
\[
\nu_{k+1} (\mc B) \leq \mc L (x) \prod_{l=1}^j \exp S_{n_{k+1}} \psi(x^{k+1}_{i_l}) M_{k+1}^{N_{k+1}-j}.
\]
\el
\bp
If  $\mu_{k+1} (\mc B) > 0$, then $\Tkk \cap \mc B \neq \emptyset$. Let $z = z(x,y) \in \Tkk \cap \mc B$ where $x \in \Tk$ and $y = y(i_1, \ldots, i_{N_{k+1}}) \in \Ckk$. Let 
\[
\mc A_{x; i_1, \ldots, i_j} = \{ z(x, y(l_1, \ldots, l_{N_{k+1}} )) \in \Tkk : l_1 = i_1, \ldots, l_j = i_j \}.
\]
Suppose that $z (x^\prime, y(\ul l)) \in \mc B$. Since $\Tk$ is $(t_k, 2 \epsilon )$ separated and $n \geq t_k$, $x = x^\prime$. For $l \in \{1, 2, \ldots, j\}$, we have
\[ 
d_{n_{k+1}} (f^{t_k + (l-1)(n_{k+1} +m_{k+1})} q, x^{k+1}_{i_l}) <2\epsilon.
\]
Since $x^{k+1}_{i_l} \in \mc S_{k+1}$ and $\mc S_{k+1}$ is $(n_{k+1}, 4 \epsilon)$ separated, it follows that $l_1 = i_1, \ldots, l_j = i_j$. Thus, if $z \in \Tkk \cap \mc B$, then $z \in \mc A_{x; i_1, \ldots, i_j}$. Hence, 
\beq
\nu_{k+1} (\mc B) &\leq& \sum_{z \in \mc A_{x; i_1, \ldots, i_j}}\mc L (z) = \mc L (x) \sum_{\ul p_{k+1} : p^{k+1}_1 = i_1, \ldots, p^{k+1}_j = i_j}\mc L ( \ul p_{k+1})\\
&=& \mc L (x) \prod_{l=1}^j \exp S_{n_{k+1}} \psi (x^{k+1}_{i_l})\prod_{p=j+1}^{N_{k+1}} \sum_{l_p =1}^{\# \mc S_{k+1}} \exp S_{n_{k+1}} \psi (x^{k+1}_{l_p}),
\eeq
whence the required result.
\ep
\bl \label{lkx}
Let $x \in \Tk$ and $i_1, \ldots, i_j$ be as before. Then
\beq
\mc L(x) \prod_{l=1}^j \exp S_{n_{k+1}} \psi(x^{k+1}_{i_l}) \leq \exp \{S_n \psi(q) &+& 2n \mbox{Var}(\psi, 2 \epsilon)\\&+&\| \psi \| (\sum_{i=1}^k N_i m_i  +j m_{k+1} ) \}.
\eeq
\el
\bp
We write $x = x(\ul p_1, \ldots \ul p_k)$. Lemma \ref{dist} tells us that
\[
d_{n_i}(f^{t_{i-1} + m_{i-1} + (l-1)(m_i + n_i)}x, x^i_{p^i_{l}}) < 2 \epsilon
\] 
for all $i \in \{1, \ldots, k\}$ and all $l \in \{1, \ldots, N_i\}$ and it follows that
\[
\mc L (x) \leq \exp \{ S_{t_k} \psi (x) + t_k Var (\psi, 2 \epsilon) + \sum_{i=1}^k \| \psi \| N_i m_i \}.
\]
Similarly,
\[
\prod_{l=1}^j \exp S_{n_{k+1}} \psi(x^{k+1}_{i_l}) \leq \exp \{ S_{n-t_k}\psi (z) + (n- t_k) Var (\psi, \frac{\epsilon}{2^{k+1}}) + \| \psi \| j m_{k+1} \}.
\]
We obtain the result from these two inequalities and that $d_n(z, q) < 2\epsilon$ and $d_{t_k} (x, q) < 2\epsilon$.
\ep
The proof of the following lemma is similar to that of lemma \ref{lemma6.11}.

\bl \label{mk+p}
For any $p \geq 1$, suppose $\mu_{k+p} (\mc B) > 0$. Let $x \in \Tk$ and $i_1, \ldots, i_j$ be as before. Then every $z \in \mc T_{k+p} \cap \mc B$ descends from some point in $\mc A_{x ;i_1, \ldots, i_j}$. We have
\[
\nu_{k+p} (\mc B) \leq \mc L (x) \prod_{l=1}^j \exp S_{n_{k+1}} \psi (x^{k+1}_{i_l}) M_{k+1}^{N_{k+1}-j} M_{k+2}^{N_{k+2}} \ldots M_{k+p}^{N_{k+p}}.
\]
\el
\bl \label{8.4}
\[\mu_{k+p} (\mc B) \leq \frac{1}{\kappa_k M_{k+1}^j}\exp \left \{S_n \psi(q) + 2n Var (\psi, 2 \epsilon) +\| \psi \| (\sum_{i=1}^k N_i m_i  +j m_{k+1} ) \right \}. \]
\el
\bp
Using lemma \ref{lkx}, it follows from lemma \ref{mk+p} that
\beq
\nu_{k+p} (\mc B) \leq M_{k+1}^{N_{k+1}-j} \ldots M_{k+p}^{N_{k+p}}\exp \{S_n \psi(q) &+& 2n Var (\psi, 2 \epsilon)\\ &+& \| \psi \| (\sum_{i=1}^k N_i m_i  +j m_{k+1} ) \}.
\eeq
Since $\mu_{k+p} = \frac{1}{\kappa_{k+p}} \nu_{k+p}$ and $\kappa_{k+p}= \kappa_k M_{k+1}^{N_{k+1}} \ldots  M_{k+p}^{N_{k+p}}$, the result follows.
\ep


\begin{lemma} \label{lemma8}
For sufficiently large $n$, $\kappa_k M_{k+1}^j \geq \exp((C - 5  \gamma)n)$
\end{lemma}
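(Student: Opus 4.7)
The plan is to combine the lower bound on $M_i$ supplied by Lemma \ref{lemma1} with the rapid-growth conditions on the sequence $N_k$ chosen in (\ref{f.1}). Since Lemma \ref{lemma1} gives $M_i \geq \exp(n_i(C - 4\gamma))$, and $\kappa_k = M_1^{N_1} \cdots M_k^{N_k}$, we would first write
\[
\kappa_k M_{k+1}^j \geq \exp\bigl((C - 4\gamma)(A_k + j n_{k+1})\bigr), \qquad A_k := \sum_{i=1}^k N_i n_i.
\]
On the other hand, from the choice of $k$ and $j$ we have $n \leq t_k + (n_{k+1} + m_{k+1})(j+1)$, and unfolding the definition of $t_k$ gives
\[
n \leq A_k + j n_{k+1} + R_k, \qquad R_k := B_k + n_{k+1} + (j+1)m_{k+1},
\]
where $B_k := \sum_{i=1}^k N_i m_i$ (possibly after absorbing an $O(1)$ constant $-m_1$ into $R_k$).

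The core of the argument will be to show that $R_k$ is negligible compared to $A_k + j n_{k+1}$ as $n \to \infty$ (equivalently as $k \to \infty$). I would treat the three summands of $R_k$ separately. First, using $A_k \geq N_k n_k$ together with the first part of (\ref{f.1}), the contribution $\sum_{i < k} N_i m_i$ is dominated by $\sum_{i<k} N_i(n_i+m_i) = o(N_k)$, while the top term $N_k m_k$ satisfies $N_k m_k /(N_k n_k) = m_k/n_k \leq (\log_2 n_k)/n_k \to 0$ by the hypothesis $n_k \geq 2^{m_k}$; together these show $B_k = o(A_k)$. Second, $n_{k+1}/A_k \leq n_{k+1}/N_k \to 0$ by the second relation in (\ref{f.1}). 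Third, since we are in the case $j \geq 1$, we may bound $(j+1)m_{k+1}/(j n_{k+1}) \leq 2 m_{k+1}/n_{k+1} \to 0$, again by $n_{k+1} \geq 2^{m_{k+1}}$.

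Combining these three limits gives $R_k/(A_k + j n_{k+1}) \to 0$ as $k \to \infty$, so for $n$ sufficiently large we can ensure
\[
(C - 5\gamma) R_k \leq \gamma (A_k + j n_{k+1}).
\]
Plugging this into the previous displays yields
\[
\log(\kappa_k M_{k+1}^j) \geq (C - 4\gamma)(A_k + j n_{k+1}) \geq (C - 5\gamma)(A_k + j n_{k+1} + R_k) \geq (C - 5\gamma) n,
\]
which is the claim.

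The main obstacle I foresee is controlling the $(j+1) m_{k+1}$ term uniformly in $j$: because $j$ may be as large as $N_{k+1} - 1$, one cannot absorb $(j+1) m_{k+1}$ into $A_k$ alone, but must pit it against the $j n_{k+1}$ summand, which is exactly where the strong quantitative hypothesis $n_{k+1} \geq 2^{m_{k+1}}$ (chosen in Lemma \ref{lemma1}) becomes essential. The simpler case $j = 0$ is handled analogously but without needing this last ratio estimate.
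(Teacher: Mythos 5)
Your proof is correct and follows essentially the same route as the paper: bound each $M_i$ below by $\exp((C-4\gamma)n_i)$, expand $\kappa_k M_{k+1}^j$, and absorb the residual gap lengths by dropping from $C-4\gamma$ to $C-5\gamma$ using the rapid-growth hypotheses on $N_k$ and the condition $n_k \geq 2^{m_k}$. You are in effect supplying the verification the paper leaves to the reader (``the reader may wish to verify this''), and your accounting of the extra $n_{k+1}+(j+1)m_{k+1}$ arising from $n < t_k + (j+1)(n_{k+1}+m_{k+1})$ is actually more careful than the paper's last displayed line, which only compares $n$ with $t_k + m_1 + j(n_{k+1}+m_{k+1})$. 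One small slip worth fixing: you have swapped your citations to the two limits in $(\ref{f.1})$ --- the estimate $\sum_{i<k}N_i(n_i+m_i) = o(N_k)$ uses the \emph{second} displayed limit there, while $n_{k+1}/N_k\to 0$ follows from the \emph{first}.
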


\begin{proof} 
Recall that by construction $M_k \geq \exp ((C-4 \gamma) n_k)$. We have 
\beq
\kappa_k M_{k+1}^j &=& M_1^{N_1} \ldots M_k^{N_k} M_{k+1}^j\\
&\geq& \exp \{ (C-4 \gamma) (N_1 n_1 + N_2 n_2 + \ldots + N_k n_k +j n_{k+1}) \}\\
&\geq& \exp \{ (C - 5\gamma) (N_1 (n_1+ m_1) + N_2 (n_2 +m_2) + \ldots \\& & + N_k (n_k + m_k) + j (n_{k+1}+ m_{k+1}) \}\\
&=& \exp \{ (C - 5\gamma) (t_k + m_1 + j (n_{k+1}+ m_{k+1}) \} \geq \exp \{ (C - 5\gamma) n \}.
\eeq
Our arrival at the third line may require some explanation. Morally, we are able to add in the extra terms with an arbitrarily small change to the constant $s$ because $n_k$ 
is much larger than $m_k$. The reader may wish to verify this. 
\end{proof}

\bl 
For sufficiently large $n$,
\[
\limsup_{k \ra \infty} \mu_{k} (B_n (q, \frac{\epsilon}{2})) \leq \exp \{-n(C - 2Var (\psi, 2 \epsilon) - 6\gamma) + \sum_{i=0}^{n-1} \psi(f^i q)\}.
\] 
\el
\bp
By lemmas \ref{8.4} and \ref{lemma8}, for sufficiently large $n$ and any $p \geq 1$,
\beq
\mu_{k+p} (\mc B) &\leq& \frac{1}{\kappa_k M_{k+1}^j}\exp \left \{S_n \psi(q) + 2n V +\| \psi \| (\sum_{i=1}^k N_i m_i  +j m_{k+1} ) \right \}\\
&\leq& \frac{1}{\kappa_k M_{k+1}^j} \exp \left \{S_n \psi(q) + n \left(2V + \gamma ) \right) \right \}\\
&\leq& \exp\{-n(C- 6\gamma - 2V)) + S_n \psi (q)\},
\eeq
where $V = \mbox{Var}(\psi, 2 \epsilon)$. Our arrival at the second line is because $n_k$ is much larger than $m_k$.
\ep
Applying the Generalised Pressure Distribution Principle, 
we have
\[
P_F (\psi, \epsilon) \geq C - 2\mbox{Var} (\psi, 2 \epsilon) - 6\gamma.
\] 
Recall that $\epsilon$ was chosen sufficiently small so $\mbox{Var}(\psi, 2 \epsilon) < \gamma$. It follows that
\[
P_{\widehat X_\varphi}(\psi, \epsilon) \geq P_F (\psi, \epsilon) \geq C - 8 \gamma.
\]
Since $\gamma$ and $\epsilon$ were arbitrary, the proof of theorem \ref{theorem5} is complete.  
\subsection{Modification of the construction to obtain theorem \ref{theorem1}} \label{modif}
Let us fix a small $\gamma > 0$. Let $\mu_1$ be ergodic and satisfy $h_{\mu_1} + \int \psi d \mu_1 > C - \gamma /2$. Let $\nu \in \mc M_f^e (X^\prime)$ satisfy $\int \varphi d {\mu_1} \neq \int \varphi d \nu$. Let $\mu_2 = t_1 \mu_1 + t_2 \nu$ where $t_1 + t_2 =1$ and $t_1 \in (0,1)$ is chosen sufficiently close to $1$ so that $h_{\mu_2} + \int \psi d \mu_2 > C -  \gamma$.
Choose $\delta > 0$ sufficiently small so
\[
\left | \int \varphi d \mu_1 - \int \varphi d \mu_2 \right | > 8 \delta.
\]
Choose a strictly decreasing sequence $\delta_k \ra 0$ with $\delta_1 < \delta$. For $k$ odd, we proceed as before, choosing a strictly increasing sequence $l_k \ra \infty$ so the set
\[
Y_k := \left \{ x \in X^\prime : \left | \frac{1}{n} S_n \varphi (x) - \int \varphi d\mu_{1} \right | < \delta_k \mbox{ for all } n \geq l_k\right \}
\]
satisfies $\mu_{1} (Y_k) > 1 - \gamma$ for every $k$. For $k$ even, we define $Y_{k, 1} := Y_{k-1}$ and find $l_k > l_{k-1}$ so that each of the sets
\[
Y_{k, 2} := \left \{ x \in X^\prime : \left | \frac{1}{n} S_n \varphi (x) - \int \varphi d \nu \right | < \delta_k \mbox{ for all } n \geq l_k\right \}
\]
satisfies $\nu (Y_{k,2}) > 1 - \gamma$.
The proof of the following lemma is similar to that of lemma \ref{lemma1}.
\begin{lemma} 
For any sufficiently small $\epsilon > 0$ and $k$ even, we can find a sequence $\hat n_k \ra \infty$ so  $[t_i \hat n_k] \geq l_{k}$ for $i=1,2$ and sets $\mc S^i_k$ so that $\mc S^i_k$ is a $([t_i \hat n_k], 4 \epsilon)$ separated set for $Y_{k,i}$ with $M^i_k := \sum_{x \in \mc S^i_k} \exp \left \{ \sum_{j=0}^{n_k -1} \psi(f^{j}x)\right \}$ satisfying
\[
M^1_k \geq \exp( [t_1 \hat n_k] (h_{\mu_1} + \int \psi d \mu_1  - 4 \gamma)),
\]
\[
M^2_k \geq \exp( [t_2 \hat n_k] (h_{\nu} + \int \psi d \nu  - 4 \gamma)).
\]
Furthermore, the sequence $\hat n_k$ can be chosen so that $\hat n_k \geq  2^{m_{k}}$ where $m_k = m(\epsilon/2^k)$ is as in the definition of specification. 
\end{lemma}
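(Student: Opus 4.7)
The plan is to mimic the proof of Lemma \ref{lemma1}, but now applying Katok's formula (Proposition \ref{theorem4}) separately to the two ergodic measures $\mu_1$ on $Y_{k,1}$ and $\nu$ on $Y_{k,2}$, at the two different time scales $[t_1 \hat n_k]$ and $[t_2 \hat n_k]$ drawn from a single coordinated sequence $\hat n_k \to \infty$.

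First I would invoke Proposition \ref{theorem4} to choose $\epsilon > 0$ small enough that simultaneously
\[
\liminf_{n \ra \infty} \frac{1}{n} \log N^{\mu_1}(\psi, \gamma, 4\epsilon, n) \geq h_{\mu_1} + \int \psi\, d\mu_1 - \gamma
\]
and
\[
\liminf_{n \ra \infty} \frac{1}{n} \log N^{\nu}(\psi, \gamma, 4\epsilon, n) \geq h_{\nu} + \int \psi\, d\nu - \gamma.
\]
Since by Birkhoff's theorem $\mu_1(Y_{k,1}) > 1 - \gamma$ and $\nu(Y_{k,2}) > 1 - \gamma$, the estimate $P_n(Y_{k,i}, \psi, 4 \epsilon) \geq Q_n(Y_{k,i}, \psi, 4 \epsilon) \geq N^{\mu'_i}(\psi, \gamma, 4\epsilon, n)$ used in Lemma \ref{lemma1} yields, with $\mu'_1 := \mu_1$ and $\mu'_2 := \nu$ and $M_i(k, n) := P_n(Y_{k,i}, \psi, 4\epsilon)$,
\[
\liminf_{n \ra \infty} \frac{1}{n} \log M_i(k, n) \geq h_{\mu'_i} + \int \psi\, d\mu'_i - 2\gamma, \qquad i = 1, 2.
\]

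Next, for each even $k$, I would choose a single integer $\hat n_k$ large enough that $\hat n_k \geq 2^{m_k}$, that $[t_i \hat n_k] \geq l_k$ for $i = 1, 2$, and that
\[
\frac{1}{[t_i \hat n_k]} \log M_i(k, [t_i \hat n_k]) \geq h_{\mu'_i} + \int \psi\, d\mu'_i - 3\gamma \quad \text{for } i = 1, 2.
\]
This is possible because $t_1, t_2 \in (0,1)$ are fixed positive constants, so $[t_i \hat n_k] \to \infty$ as $\hat n_k \to \infty$, and the liminf inequality above holds on a cofinite set of integers for each $i$; intersecting the two cofinite sets and clearing the constraints $\hat n_k \geq 2^{m_k}$, $[t_i \hat n_k] \geq l_k$ leaves a cofinite set of admissible $\hat n_k$, from which we can extract a sequence diverging to infinity.

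Finally, for each $i \in \{1, 2\}$ I would pick $\mc S^i_k \subseteq Y_{k, i}$ to be a $([t_i \hat n_k], 4\epsilon)$-separated set for which the associated exponential sum $M^i_k$ is within a factor $e^{-\gamma [t_i \hat n_k]}$ of $M_i(k, [t_i \hat n_k])$; rearranging the resulting inequality
\[
\frac{1}{[t_i \hat n_k]} \log M^i_k \geq h_{\mu'_i} + \int \psi\, d\mu'_i - 4\gamma
\]
gives exactly the two advertised lower bounds. The only mildly delicate point is coordinating a single sequence $\hat n_k$ so that Katok's lower bound is near-saturated at both scales $[t_1 \hat n_k]$ and $[t_2 \hat n_k]$ simultaneously; once that bookkeeping is handled, the rest of the argument is structurally identical to the proof of Lemma \ref{lemma1}.
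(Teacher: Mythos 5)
Your proof is correct and is exactly the elaboration the paper intends when it remarks only that the proof is ``similar to that of lemma \ref{lemma1}'' and gives no further details. You correctly identify the one genuine bookkeeping wrinkle---that a single sequence $\hat n_k$ must near-saturate Katok's $\liminf$ bound at both scales $[t_1 \hat n_k]$ and $[t_2 \hat n_k]$ simultaneously---and dispose of it properly by observing that, for each $i$, the relevant inequality holds on a cofinite set of integers, so the intersection of the two cofinite sets together with the constraints $\hat n_k \geq 2^{m_k}$ and $[t_i\hat n_k]\geq l_k$ is still cofinite.
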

We now use the specification property to define the set $\mc S_k$ as follows. For $i=1,2$, let $y_i \in \mc S^i_k$ and define $x = x(y_1, y_2)$ to be a choice of point which satisfies
\[
d_{[t_1 \hat n_k]}(y_1, x) < \frac{\epsilon}{2^k}\mbox{ and } d_{[t_2 \hat n_k]}(y_2, f^{[t_1 \hat n_k] + m_k} x) < \frac{\epsilon}{2^k}.
\]
Let $\mc S_k$ be the set of all points constructed in this way. Let $n_k = [t_1 \hat n_k] + [t_2 \hat n_k] + m_k$. Then $n_k$ is the amount of time for which the orbit of points in $\mc S_k$ has been prescribed and we have $n_k/\hat n_k \ra 1$. We note that $\mc S_k$ is $(n_k, 4 \epsilon)$ separated and so $\# \mc S_k = \# \mc S_k^1 \# S_k^2$. Let $M_k = M_k^1 M_k^2$. Given our new construction of $\mc S_k$, the rest of our constuction goes through unchanged.
\subsection{Modification to the proof}
For every $x \in \mc S_k$,
\beq
|S_{n_k} \varphi (x) - n_k \int \varphi d \mu_2 | &\leq& |S_{[t_1 \hat n_k]} \varphi (x)- [t_1 \hat n_k] \int \varphi d \mu_1| + m_k \|\varphi \| \\&+&|S_{[t_2 \hat n_k]} \varphi (f^{[t_1 n_k] + m_k}x) - [t_2 \hat n_k] \int \varphi d \nu |
\eeq
It follows that $|\frac{1}{n_k} S_{n_k} \varphi (x) - \int \varphi d \mu_2| \ra 0$. This observation allows us to modify the proof of lemma \ref{lemma6} and ensures that our construction still gives rise to points in $\widehat X_\varphi$.
We have for sufficiently large $n_k$,
\beq
M_k &\geq& \exp \{[t_1 \hat n_k](h_{\mu_1} + \int \psi d \mu_1  - 4 \gamma) + [t_2 \hat n_k](h_{\nu} + \int \psi d \nu  - 4 \gamma)\}\\&\geq& \exp\{ (1 - \gamma) \hat n_k (t_1(h_{\mu_1} + \int \psi d \mu_1) + t_2(h_{\nu} + \int \psi d \nu)  - 4 \gamma)\}\\
&\geq& \exp (1 - \gamma)^2 n_k (h_{\mu_2} + \int \psi d \mu_2 - 4 \gamma )
\geq \exp (1 - \gamma)^2 n_k (C - 5 \gamma).
\eeq
Since $\gamma$ was arbitrary, this observation allows us to modify the estimates in lemma \ref{lemma8} to cover this more general construction. 
\section{Examples} \label{examp}
\subsection{Standard examples}
We recall that any factor of a topologically mixing shift of finite type has the specification property and thus our result applies.  
Bowen's specification theorem tells us that a compact locally maximal hyperbolic set of a topologically mixing diffeomorphism $f$ has the Bowen specification property. In particular, 
our result applies to topologically mixing Anosov diffeomorphisms (which include any Anosov diffeomorphism of a compact connected manifold whose wandering set is empty). 

\subsection{The Manneville-Pomeau family of maps}
Let $I = [0,1]$. The MP family of maps, parametrised by $\alpha \in (0, 1)$ are given by
\[
f_\alpha : I \mapsto I, f_\alpha (x) = x + x^{1+\alpha} \mod 1.
\]
Considered as a map of $S^1$, $f_\alpha$ is continuous. Since $f_\alpha^{\prime} (0) = 1$, the system is not uniformly hyperbolic and thus the results of \cite{BS} do not apply. However, since the MP maps are all topologically conjugate to a full shift on two symbols, they satisfy the specification property and thus theorem \ref{theorem1} applies. 

\subsection{Beyond Symbolic Dynamics}
As remarked in the introduction, by the Blokh theorem, any topologically mixing interval map satisfies specification. For example, Jakobson \cite{Ja} showed that for a set of parameter values of positive Lebesgue measure in $[0, 4]$, the logistic map $f_\lambda (x) = \lambda x (1 - x)$ is topologically mixing.  

Lind \cite{Li} showed that a quasi-hyperbolic toral automorphism satisfies specification but not Bowen specification iff the matrix representation of the automorphism in Jordan normal form admits no $1$'s off the diagonal in the central direction. Such maps cannot be factors of topologically mixing shifts of finite type or they would inherit the Bowen specification property. 

Theorems 17.6.2 and 18.3.6 of \cite{KH} ensure that the geodesic flow of any compact connected Riemannian manifold of negative sectional curvature is topologically mixing and Anosov. The specification theorem for flows (proved in \cite{Bo4}) ensures that such a flow has the specification property 18.3.13 of \cite{KH}. It is easy to see that the time-$t$ map of a flow with the specification property satisfies our specification property \ref{3a}. We conclude that our results apply to the time-$t$ map of the geodesic flow of any compact connected Riemannian manifold of negative sectional curvature.

\section{Application to Suspension Flows}
We apply our main result to suspension flows. Let $f : X \mapsto X$ be a homeomorphism of a compact metric space $(X, d)$. We consider a continuous roof function $\rho : X \mapsto (0, \infty)$. 
We define the suspension space to be
\[
X_\rho = \{ (x, s) \in X \times \IR : 0 \leq s \leq \rho(x) \},
\]
where $(x, \rho(x))$ is identified with $(f(x), 0)$ for all $x$. Alternatively, we can define $X_\rho$ to be $X \times [0, \infty)$, 
quotiented by the equivalence relation $(x, t) \sim (y, s)$ iff $(x,t) = (y,s)$ or there exists $n \in \IN$ so $(f^n x, t - \sum_{i=0}^{n-1} \rho (f^i x)) = (y, s)$ or $(f^{-n} x, t + \sum_{i=1}^{n} \rho (f^{-i}x)) = (y,s)$. 
Let $\pi$ denote the quotient map from $X \times [0, \infty)$ to $X_\rho$. We extend the domain of definition of $\pi$ to $X \times ( - \inf \rho, \infty)$ by identifying points of the form $(y, - t)$ with $(f^{-1} y, \rho (y) -t)$ for $t \in (0, \inf \rho)$. We write $(x,s)$ in place of $\pi(x, s)$ when $\inf \rho < s < \rho (x)$.
We define the flow $ \Psi = \{ g_t \}$ on $X_\rho$ by
\[
g_t (x, s) = \pi(x, s+t).
\]
To a function $\Phi :X_\rho \mapsto \IR$, we associate the function $\varphi: X \mapsto \IR$ by $\varphi(x) = \int_0^{\rho(x)} \Phi (x, t) dt$. Since the roof function is continuous, when $\Phi$ is continuous, so is $\varphi$. For $\mu \in \mathcal{M}_{f} (X)$, we define the measure $\mu_\rho$ by
\[
\int_{X_\rho} \Phi d \mu_\rho = \int_X \varphi d \mu / \int \rho d \mu
\]
for all $\Phi \in C (X_\rho)$, where $\varphi$ is defined as above. We have $\Psi$-invariance of $\mu_\rho$ (ie. $\mu (g_t ^{-1} A) = \mu (A)$ for all $t \geq 0$ and measurable sets $A$). The map $\mc R : \mathcal{M}_{f} (X) \mapsto \mathcal{M}_{\Psi} (X_\rho)$ given by $\mu \mapsto \mu_\rho$ is a bijection. It is verified in \cite{PP} that $h_{\mu_\rho} = h_\mu / \int \rho d \mu$ and hence, 
\[
\htop (\Psi) = \sup \{ h_\mu : \mu \in \mathcal{M}_{\Psi} (X_\rho) \} = \sup \left \{\frac{ h_\mu }{ \int \rho d \mu} : \mu \in \mathcal{M}_{f} (X) \right \},
\]
where $\htop(\Psi)$ is the topological entropy of the flow. Abramov's theorem states that $\htop (\Psi)$ is the unique solution to the equation $P_X^{classic} (-s \rho) = 0$. We use the notation  $h_{top} ( Z, \Psi )$ for topological entropy of a non-compact subset $Z \subset X_\rho$ with respect to $\Psi$ (defined below).
We define
\[
\widehat X_\rho = \{ (x,s) \in \xr : \lim_{T \ra \infty} \frac{1}{T} \int_0^T \Phi(g_t (x,s)) d t \mbox{ does not exist } \}.
\]
By the ergodic theorem for flows, $\mu (\widehat X_\rho) = 0$ for any $\mu \in \mathcal{M}_{\Psi} (X_\rho)$. Our main result on suspension flows is the following (the proof is at the end of the section).
\bt \label{susflow}
Let $(X,d)$ be a compact metric space and $f:X \mapsto X$ be a homeomorphism with the specification property. Let $\rho : X \mapsto (0, \infty)$ be continuous. Let $(\xr, \Psi)$ be the corresponding suspension flow over $X$. Assume that $\Phi : \xr \mapsto \IR$ is continuous and satisfies $\inf_{\mu \in \mathcal{M}_{\Psi} (X_\rho)} \int \Phi d \mu < \sup_{\mu \in \mathcal{M}_{\Psi} (X_\rho)} \int \Phi d \mu$. Then $h_{top} ( \widehat \xr, \Psi ) = h_{top} (\Psi)$.
\et
We remark that the flow $\Phi$ may not satisfy specification itself. For example, when $\rho$ is a constant fuction, $\Phi$ is not even topologically mixing.

\subsection{Topological entropy for flows as a characteristic of dimension type} \label{tef}
Let $Z \subset X$ be an arbitrary Borel set, not necessarily compact or invariant. Let $ \Psi = \{ \psi_t \}$ be a flow on $X$. We consider finite and countable collections of the form $\Gamma = \{B_{t_i}(x_i, \epsilon) \}_i$, where $t_i \in (0, \infty)$, $x_i \in X$ and
\[
B_{t}(x, \epsilon) = \{ y \in X : d( \psi_s (x), \psi_s (y)) < \epsilon \mbox{ for all } s \in [0, t)\}.
\]
For $\alpha \in \IR$, we define the following quantities:
\[
Q(Z,\alpha, \Gamma) = \sum_{B_{t_i}(x_i, \epsilon) \in \Gamma} \exp \left(-\alpha t_i \right),
\]
\[
M(Z, \alpha, \epsilon, T) = \inf_{\Gamma} Q(Z,\alpha, \Gamma),
\]
where the infimum is taken over all finite or countable collections of the form $\Gamma = \{ \mathit{B}_{t_i}(x_i, \epsilon) \}_i$ with $x_i \in X$ such that $\Gamma$ covers Z and $t_i \geq T$ for all $i = 1, 2, \ldots$. Define
\[
m(Z, \alpha, \epsilon) = \lim_{T \rightarrow \infty} M(Z, \alpha,\epsilon, T).
\]
The existence of the limit is guaranteed since the function $M(Z, \alpha,\epsilon, T)$ does not decrease with $T$. By standard techniques, we can show the existence of
\begin{displaymath}
\htop (Z, \epsilon) := \inf \{ \alpha : m(Z, \alpha, \epsilon) = 0\} = \sup \{ \alpha :m(Z, \alpha, \epsilon) = \infty \}.
\end{displaymath}
\begin{definition}
The topological entropy of $Z$ with respect to $\Psi$ is given by
\[
\htop (Z, \Psi) = \limn \htop (Z, \epsilon).
\]
\ed
\subsection{Properties of suspension flows}

The following lemma is similar to one given in \cite{BS2}. 
\bl \label{tf}
Let $(X,d)$ be a compact metric space and $f:X \mapsto X$ be a homeomorphism. Let $\rho : X \mapsto (0, \infty)$ be continuous. Let $(\xr, \Psi)$ be the corresponding suspension flow over $X$. Let $\Phi: \xr \mapsto \IR$ be continuous and $\varphi: X \mapsto \IR$ be given by $\varphi(x) = \int_0^{\rho(x)} \Phi (x, t) dt$. We have
\[
\liminf_{T \ra \infty} \frac{1}{T} \int_0^T \Phi(g_t (x,s)) d t = \liminf_{n \ra \infty} \frac{S_n \varphi (x)}{S_n \rho (x)},
\]
\[
\limsup_{T \ra \infty} \frac{1}{T} \int_0^t \Phi(g_t (x,s)) d t = \limsup_{n \ra \infty} \frac{S_n \varphi (x)}{S_n \rho (x)},
\]
\[
\widehat \xr = \{ (x, s) : \limn \frac{S_n \varphi (x)}{S_n \rho (x)} \mbox{ does not exist}, 0\leq s < \rho (x) \}.
\]
\el
\bp
Fix $\gamma > 0$. Given $T > 0$, let $n$ satisfy $S_n \rho(x) \leq T <S_{n+1} \rho (x)$. It follows that $1- \frac{\| \rho \|}{T} \leq \frac{S_n \rho (x)}{T} \leq 1$. Assume $T$ is sufficiently large that $2 T^{-1} \|\rho\| \| \Phi\| < \gamma$.
We note that
\beq
\int_0^T \Phi(g_t (x,s)) d t &\leq& \sum_{i=0}^{n-1} \int^{\rho (f^{i} x)}_0 \Phi(f^i x,t) d t +2\|\rho\| \| \Phi\| \\
&=& S_{n} \varphi (x) +2\|\rho\| \| \Phi\|,
\eeq
and so 
\beq
\frac{1}{T} \int_0^T \Phi(g_t (x,s)) d t &\leq& \frac{S_n \rho(x)}{T}  \frac{S_{n} \varphi (x)}{S_n \rho (x)} +\frac{2}{T}\|\rho\| \| \Phi\| \\ &\leq& \frac{S_{n} \varphi (x)}{S_n \rho (x)} + \gamma.
\eeq
The result follows from this and a similar calculation for the opposite inequality.
\ep
As the lemma suggests, our result on $\widehat X_\rho$ will follow from a corresponding result about the set
\begin{equation} \label{xpr}
\widehat X (\varphi, \rho) := \left \{ x \in X : \limn \frac{S_n \varphi (x)}{S_n \rho (x)} \mbox{ does not exist}\right \}.
\end{equation}
\bl
Under our assumptions, the following are equivalent: 

(a) $\widehat X_\rho \neq \emptyset$; (b) $\widehat X (\varphi, \rho) \neq \emptyset$; 

(c) $\inf_{\mu \in \mathcal{M}_{\Psi} (X_\rho)} \int \Phi d \mu < \sup_{\mu \in \mathcal{M}_{\Psi} (X_\rho)} \int \Phi d \mu$;

(d) $\inf_{\mu \in \mathcal{M}_{f} (X)} \int \varphi d \mu / \int \rho d \mu < \sup_{\mu \in \mathcal{M}_{f} (X)} \int \varphi d \mu / \int \rho d \mu$;

(e) $\inf_{\mu \in \mathcal{M}^e_{f} (X)} \int \varphi d \mu / \int \rho d \mu < \sup_{\mu \in \mathcal{M}^e_{f} (X)} \int \varphi d \mu / \int \rho d \mu$;

(f) $S_n \varphi /S_n \rho$ does not converge (uniformly or pointwise) to a constant; 





(g) $\frac{1}{T} \int_0^T \Phi(g_t) d t$ does not converge (uniformly or pointwise) to a constant;

Let $\varphi_T (x) := \int_0^T \Phi(g_t x) d t$.

(h) There exists $T$ such that $\varphi_T \notin  \bigcup_{c \in \IR} \overline{Cob (X_\rho, g_T, c)}$, i.e $\varphi_T$ is not in the closure of the coboundaries for the time-$T$ map of the flow; 

(i) For all $T$, $\varphi_T \notin \bigcup_{c \in \IR} \overline{Cob (X_\rho, g_T, c)}$.
\el
\bp
First we note that (d) $\iff$ (e) $\iff$ (f) is similar to the proof of the analogous statements in lemma \ref{equiv}.
For (c) $\Rightarrow$ (d), let $\mu_1, \mu_2 \in \mathcal{M}_{\Psi} (X_\rho)$ satisfy $\int \Phi d \mu_1 < \int \Phi d \mu_2$. Let $v_i = \mc R^{-1} \mu_i$ for $i = 1, 2$. By definition, $\int \varphi d v_i / \int \rho d v_i = \int \Phi d \mu_i$ for $i = 1, 2$ and so $\int \varphi d v_1 / \int \rho d v_1 < \int \varphi d v_2 / \int \rho d v_2$. (d) $\Rightarrow$ (c) is similar. (f) $\iff$ (g) follows from lemma \ref{tf}.


We show (c) $\iff$ (h) $\iff$ (i). We define bijections $\mc R_T : \mathcal{M}_{g_T} (X_\rho) \mapsto \mathcal{M}_{\Psi} (X_\rho)$ by 
\[
\int_{X_\rho} \Phi d \mc R_T(\mu) = \frac{1}{T} \int_X \varphi_T d \mu
\]
for all $\Phi \in C (X_\rho)$, where $\varphi_T (x) := \int_0^T \Phi(g_t x) d t$. A similar argument to that of (c) $\iff$ (d) and an appliction of lemma \ref{equiv} gives the desired results.

(a) $\Rightarrow$ (g), (b) $\Rightarrow$ (f), (b) $\Rightarrow$ (a) are trivial. (d) $\Rightarrow$ (b) is a consequence of theorem \ref{ratio}, so we omit the proof.
\ep
We remark that if $\varphi \in \overline{Cob (X, f, 0)}$ or $\varphi - \rho \in \overline{Cob (X, f, 0)}$, then $S_n \varphi / S_n \rho$ converges uniformly to a constant and so $\widehat X_\rho = \emptyset$.

\subsection{A generalisation of the main theorem}
To prove theorem \ref{susflow}, we require the following generalisation of theorem \ref{theorem0.1}.
\begin{theorem} \label {ratio}
Let $(X,d)$ be a compact metric space and $f:X \mapsto X$ be a continuous map with specification. Let $\varphi, \psi \in C(X)$ and $\rho : X \mapsto (0, \infty)$ be continuous with $\inf_{\mu \in \mathcal{M}_{f} (X)} \int \varphi d \mu / \int \rho d \mu < \sup_{\mu \in \mathcal{M}_{f} (X)} \int \varphi d \mu / \int \rho d \mu$. Let $\widehat X (\varphi, \rho)$ be defined as in (\ref{xpr}). We have $P_{\widehat X(\varphi, \rho)} (\psi) = P_X^{classic} (\psi)$.
\end{theorem}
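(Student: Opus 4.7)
The plan is to follow the scheme of Theorem \ref{theorem5} essentially verbatim, with two simple ratio-version modifications. The key algebraic observation is that for any ergodic $\mu \in \mc M_f(X)$, the bound $\inf \rho > 0$ (which holds since $\rho$ is continuous and positive on a compact space) implies that $\mu$-a.e.\ $x$ satisfies $S_n \varphi(x)/S_n\rho(x) \to \alpha(\mu) := \int \varphi\, d\mu / \int \rho\, d\mu$, and moreover the map $\mu \mapsto \alpha(\mu)$ is continuous in the weak-$\ast$ topology. So to produce points in $\widehat X(\varphi, \rho)$, I will construct a fractal whose orbits alternate between shadowing $\mu_1$-generic and $\mu_2$-generic segments, for two ergodic measures with $\alpha(\mu_1) \neq \alpha(\mu_2)$ and with $h_{\mu_i} + \int \psi\, d\mu_i$ close to $C := P^{classic}_X(\psi)$.

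To obtain these measures, I repeat the argument proving Theorem \ref{theorem0.1}: pick ergodic $\mu_1$ with $h_{\mu_1} + \int \psi\, d\mu_1 > C - \gamma/3$, take any $\nu \in \mc M_f(X)$ with $\alpha(\nu) \neq \alpha(\mu_1)$ (provided by the hypothesis of the theorem), form the convex combination $\nu' = t\mu_1 + (1-t)\nu$ for $t$ close to $1$ so that $h_{\nu'} + \int \psi\, d\nu' > C - 2\gamma/3$, and observe that by continuity of $\alpha$ the combination still satisfies $\alpha(\nu') \neq \alpha(\mu_1)$ for $t$ close to but not equal to $1$. Then apply the EKW approximation to extract an ergodic $\mu_2$ with $h_{\mu_2} + \int\psi\, d\mu_2 > C - \gamma$ and, by weak-$\ast$ continuity of $\alpha$, with $\alpha(\mu_2) \neq \alpha(\mu_1)$.

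Next I choose $\delta > 0$ with $|\alpha_1 - \alpha_2| > 4\delta$ (where $\alpha_i = \alpha(\mu_i)$) and redefine the sets $Y_k$ from \eqref{5} so that they simultaneously control both Birkhoff sums: for $n \geq l_k$ a point $x \in Y_k$ satisfies both $|S_n\varphi(x)/n - \int \varphi\, d\mu_{\rho(k)}| < \delta_k$ and $|S_n\rho(x)/n - \int \rho\, d\mu_{\rho(k)}| < \delta_k$. By applying Birkhoff to $\varphi$ and $\rho$ separately, $\mu_{\rho(k)}(Y_k) > 1 - \gamma$ for suitable $l_k$ (absorb a factor of $2$ into $\gamma$). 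Now every step of the construction of $\mc S_k, \mc C_k, \mc T_k$, the fractal $F$, and the measures $\mu_k$ proceeds without any change, as does the sequence of lemmas \ref{lemma6.11}--\ref{8.4} culminating in the application of the generalised pressure distribution principle; these are purely pressure-theoretic estimates that do not depend on the specific way the irregular set is defined.

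The one substantive modification is in Lemma \ref{lemma6}: I need to show that $F \subset \widehat X(\varphi, \rho)$, i.e.\ that $S_{t_k}\varphi(p)/S_{t_k}\rho(p)$ diverges along $t_k$ for every $p \in F$. Running the shadowing argument of Lemma \ref{lemma6} twice, once for $\varphi$ and once for $\rho$, yields
\[
\left|\tfrac{1}{t_k} S_{t_k}\varphi(p) - \textstyle\int \varphi\, d\mu_{\rho(k)}\right| \to 0, \qquad \left|\tfrac{1}{t_k} S_{t_k}\rho(p) - \textstyle\int \rho\, d\mu_{\rho(k)}\right| \to 0.
\]
Dividing and using $\int \rho\, d\mu_{\rho(k)} \geq \inf \rho > 0$ gives $S_{t_k}\varphi(p)/S_{t_k}\rho(p) \to \alpha_{\rho(k)}$ along the subsequence, so the ratio oscillates between values near $\alpha_1$ and $\alpha_2$. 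The main (minor) obstacle is book-keeping: the error terms in the shadowing estimate must be small relative to both $\int \rho\, d\mu_i$ and $|\alpha_1 - \alpha_2|$, but this is ensured exactly as in \eqref{j}--\eqref{n} by the growth assumption \eqref{f.1} and the choice of $\delta, \delta_k$. The rest of the argument then delivers $P_{\widehat X(\varphi, \rho)}(\psi) \geq C - 8\gamma$ for every $\gamma > 0$, giving $P_{\widehat X(\varphi,\rho)}(\psi) = C$.
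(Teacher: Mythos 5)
Your overall plan matches the paper's: keep the entire fractal construction, the measures $\mu_k$, and the lemmas feeding the generalised pressure distribution principle, and only change the sets $Y_k$ so that they control the ratio $S_n\varphi/S_n\rho$ rather than the raw Birkhoff average. The paper achieves this in one step by putting the ratio condition directly into the definition of $Y_k$ and invoking the ratio ergodic theorem; you instead impose Birkhoff control on $\varphi$ and $\rho$ separately and divide, which is equivalent because $\inf\rho>0$ on a compact space. That part of your argument is fine.

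The gap is in how you obtain the two measures $\mu_1,\mu_2$. You run the argument of Theorem \ref{theorem0.1}, which invokes Theorem B of \cite{EKW} to approximate $\nu'$ by ergodic measures; this step needs the entropy map $\mu\mapsto h_\mu$ to be upper semi-continuous, and that hypothesis is \emph{not} part of Theorem \ref{ratio}. The paper avoids this by basing the proof of Theorem \ref{ratio} on the full Theorem \ref{theorem1}, whose proof was made self-contained in \S\ref{modif}: there one takes $\mu_2=t_1\mu_1+t_2\nu$ \emph{non-ergodic} and modifies the construction of $\mc S_k$ by gluing a $[t_1\hat n_k]$-length $\mu_1$-generic segment to a $[t_2\hat n_k]$-length $\nu$-generic segment, then re-estimates $M_k$. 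To repair your proof you would need to carry out the ratio analogue of that modification (which works, since the ratio Birkhoff averages of $\mu_1$- and $\nu$-generic segments still concatenate appropriately because $\rho$ is bounded below), rather than relying on EKW. Separately, your justification that $\alpha(\nu')\neq\alpha(\mu_1)$ ``by continuity'' is backwards --- continuity of $\alpha$ only tells you $\alpha(\nu')\to\alpha(\mu_1)$ as $t\to1$, which is the opposite of what you want. The conclusion is still true (a direct computation shows $\alpha(t\mu_1+(1-t)\nu)=\alpha(\mu_1)$ iff $\alpha(\nu)=\alpha(\mu_1)$, for $t<1$), but the reasoning as stated does not establish it.
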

\bp
We require only a small modification to the proof of theorem \ref{theorem1}. We replace the family of sets defined at (\ref{5}) by the following:
\[
Y_k := \left \{ x \in X : \left | \frac{S_n \varphi (x)}{S_n \rho (x)} - \frac{\int \varphi d\mu_{\rho(k)}}{\int \rho d \mu_{\rho(k)}}  \right | < \delta_k \mbox{ for all } n \geq l_k\right \}
\]
chosen to satisfy $\mu_{\rho(k)} (Y_k) > 1 - \gamma$ for every $k$. This is possible by the ratio ergodic theorem. The rest of the proof requires only superficial modifications.
\ep
\subsection{The relationship between entropy of a suspension flow and pressure in the base}
The natural metric on $X_\rho$ is the Bowen-Walters metric. The appendix of \cite{BS2} contains a study of dynamical balls taken with respect to this metric when the roof function is H\"older. We assume only continuity of $\rho$. 
When $\rho$ is non-constant, computations involving this metric are rather unwieldy, particularly when no regularity of the roof function is assumed. We sidestep this problem by making the following definitions. Let $(x, s) \in X_\rho$ with $0 \leq s < \rho (x)$. We define the horizontal segment of $(x,s)$ to be $\{(y, t): y \in X, 0 \leq t < \rho (y), t = \rho (y) s \rho(x)^{-1} \}$ and the horizontal ball of radius $\epsilon$ at $(x,s)$ to be
\[
B^H((x,s), \epsilon) := \{(y, \frac{s}{\rho(x)} \rho(y)) : (1-\frac{s}{\rho(x)}) d(x,y) + \frac{s}{\rho(x)} d(fx, fy) < \epsilon \}.
\]
We define
\[
B((x,s), \epsilon) = \bigcup_{t : |s-t| < \epsilon} B^H((x,t), \epsilon), \] 
\[B_T ((x,s), \epsilon) = \bigcap_{t=0}^T g_{-t} B(g_t(x,s), \epsilon).\]
We are abusing notation, since $B((x,s), \epsilon)$ is not a ball in the Bowen-Walters metric. We can consider covers by sets of the form $B_T ((x,s), \epsilon)$ in the definition of topological pressure in place of covers consisting of dynamical balls. This is because one can verify that there exists constants $C_1, C_2 > 0$ such that the metric ball of radius $C_1 \epsilon$ at $(x,s)$ is a subset of $B((x,s), \epsilon)$, that a set of diameter $\epsilon$ is contained in some set $B((x,s), C_2 \epsilon)$ for sufficiently small $\epsilon$, that $B((x,s), \epsilon)$ is open and as $\epsilon \ra 0$, $diam (\{B((x,s), \epsilon): (x, s) \in X_\rho \}) \ra 0$. Diameter and topology are taken with respect to the Bowen-Walters metric. 
\bl
Let $(y,s) \in X \times (-\inf \rho, \infty)$ and suppose $\pi(y, s) \in B((x, \delta), \epsilon)$, where $|\delta| \leq \epsilon < \inf \rho /4$. Then for $\epsilon$ sufficiently small there exists $n \in \IN$ such that \[(y, s) \sim (f^n y, s -S_n \rho (y)),
|s- S_n \varphi (y)| < K \epsilon \mbox{ and } d(x, f^ny) < K \epsilon,
\] 
where $K = 4\| \rho \| / \inf \rho$ and $K \epsilon < \inf \rho$.
\el
\bp
Suppose $(y, s) \in B^H((x, \gamma), \epsilon)$ for some $\gamma$ with $0 \leq |\gamma| < 2\epsilon$. Then $s = \gamma \rho (y) \rho (x)^{-1}$. Therefore, $s < 2\epsilon \|\rho\|/ \inf \rho$. We have
\[
(1 - \frac{\gamma}{\rho(x)}) d(x,y) + \frac{\gamma}{\rho(x)} d(fx, fy) < \epsilon.
\]
Thus $(1 - \frac{\gamma}{\rho(x)}) d(x,y) < \epsilon$. Rearranging, we have $d(x,y) <\epsilon \rho(x) (\rho(x) - \gamma)^{-1} < K \epsilon$. For $-\epsilon <\gamma < 0$, we apply a similar argument. 
Now assume $\pi(y, s) \in B((x, \delta), \epsilon)$. Then $\pi(y, s)$ has a unique representation $(y^\prime, s^\prime)$ with $|s^\prime|< 2\epsilon$ and $y^\prime = f^n y$. We apply the previous argument to $(y^\prime, s^\prime)$.
\ep
\bl \label{susballs}
Suppose $| s| < \epsilon$ and $S_n \rho (x) \leq T <S_{n+1} \rho (x)$, then 
\[
B_T((x,s), \epsilon) \subset B_{n} (x, K \epsilon)) \times (-K \epsilon, K \epsilon).
\]
\el
\bp
Let $(y, t) \in B_T((x,s), \epsilon)$, with $|t| < K\epsilon$. Then $d(x, y) < K \epsilon$. Let $t_i$ satisfy $s+t_i = S_i \rho (x)$ for $i =1, \ldots n$. Then $g_{t_i} (y,t) \in B((f^{i-1}x, 0), \epsilon)$. Applying the previous lemma, we have $d(f^n y, f^{i-1}x) < K \epsilon$ for some $n \in \IN$. Furthermore, we must have $n=i-1$. Suppose not, 
then for some time $\tau \in [0, S_i\rho(x))$, $g_\tau (y, t) \notin B(g_\tau(x,s), \epsilon)$, which is a contradiction. This implies that $y \in B_{n} (x, K\epsilon)$. 
\ep

\bt \label{basepressure}
Let $(X,d)$ be a compact metric space and $f:X \mapsto X$ be a homeomorphism. Let $\rho : X \mapsto (0, \infty)$ be continuous. Let $(\xr, \Psi)$ be the corresponding suspension flow over $X$. For an arbitrary Borel set $Z \subset X$, define $Z_\rho := \{ (z, s) : z \in Z, 0 \leq s < \rho(s) \}$. Let $\beta$ be the unique solution to the equation $P_Z ( - t \rho) = 0$. Then $\htop (Z_\rho, \Psi) \geq \beta$. 
\et
\bp
The function $t \ra P_Z ( - t \rho)$ is continuous and decreasing. Since $P_Z (0) \geq 0$, it follows that there exists a unique solution to the equation $P_Z ( - t \rho) = 0$. We assume $P_Z (- \beta \varphi) > 0$ and show $\htop (Z_\rho, \Psi) \geq \beta$. 
Let $\epsilon > 0$ be arbitrary and sufficiently small so lemma \ref{susballs} applies and $P_{Z} (- \beta \varphi, \epsilon) > 0$. Choose $\Gamma = \{B_{t_i}((x_i, s_i), \epsilon) \}$ covering $Z_\rho$ with $t_i \geq T$. 
Take the subcover $\Gamma^\prime$ of $\Gamma$ which covers $Z \times \{0\}$, and assume without loss of generality that $|s_i| < \epsilon$. Let $m_i$ be the unique number so $S_{m_i}\rho (x) \leq t_i < S_{m_i+1} \rho(x)$. Let $m(\Gamma^\prime) = \inf m_i$ obtained in this way. Then $m(\Gamma^\prime) \geq \|\rho\|^{-1} (T - \| \rho \|)$ and thus as $T$ tends to infinity so does $m(\Gamma^\prime)$. Let $\Gamma^{\prime \prime} = \{ B_{m_{i}} (x_i, K \epsilon) \} : B_{t_i}((x_i, s_i), \epsilon) \in \Gamma^\prime \}$. By lemma \ref{susballs}, $B_{m_{i}} (x_i, K \epsilon) \times (-K\epsilon, K\epsilon)$ covers $Z \times \{0\}$ and if we assume $\epsilon$ was chosen sufficiently small, then $\Gamma^{\prime \prime}$ is a cover for $Z$. 
\beq
 Q(Z\times\{0\}, \beta, \Gamma^\prime) &\geq& \sum_{B_i \in \Gamma^\prime} \exp -\beta (S_{m_i} \rho (x_i) +\| \rho\|)\\
&\geq& \sum_{B_i \in \Gamma^{\prime \prime}} \exp - \beta (\sup_{y \in B_i} S_{m_i} \rho (y) +\| \rho\| + \mbox{Var}(\rho, K \epsilon))\\
&=& \exp\{ - \beta (\mbox{Var}(\rho, K \epsilon) + \|\rho\|)\} Q(Z, 0, \Gamma^{\prime \prime}, -\beta \rho)\\
&\geq& \exp\{ - \beta (\mbox{Var}(\rho, K \epsilon) + \|\rho\|)\} M(Z, 0, m(\Gamma^\prime), -\beta \rho)\\
&\geq& 1,
\eeq
if $T$ and hence $m(\Gamma^\prime)$ are chosen to be sufficiently large. We have 
\[Q(Z_\rho, \beta, \Gamma) \geq Q(Z\times \{0\}, \beta, \Gamma^\prime)
\] and since $\Gamma$ was arbitrary, we have $M(Z_\rho, \beta, T- \|\rho\|, \epsilon) \geq 1$ and hence $\htop (Z_\rho, \Psi, \epsilon) \geq \beta$.
\ep
\subsection{Proof of Theorem \ref{susflow}}
Given the results we have proved so far, theorem \ref{susflow} follows easily. By lemma \ref{tf}, $\widehat X_{\rho} = Z_\rho$, where $Z= \widehat X(\varphi, \rho)$. We recall that $\htop (\Psi)$ is the unique number satisfing $P_X^{classic} (- t \rho) = 0$. By theorem \ref{ratio}, $P_Z (- t \rho)= P^{classic}_X (-t \rho)$ for all $t \in \IR$, and so $\htop (\Psi)$ is the unique number such that $P_Z ( - t \rho) = 0$. Applying theorem \ref{basepressure}, our result follows.
\section*{Acknowledgements}
This work constitutes part of my PhD, which is supported by the EPSRC. I would like to thank my supervisors Mark Pollicott and Peter Walters for many useful discussions and reading draft versions of this work, for which I am most grateful.

\bibliographystyle{amsplain}
\bibliography{fullpressure}

\providecommand{\bysame}{\leavevmode\hbox to3em{\hrulefill}\thinspace}
\providecommand{\MR}{\relax\ifhmode\unskip\space\fi MR }
\providecommand{\MRhref}[2]{%
  \href{http://www.ams.org/mathscinet-getitem?mr=#1}{#2}
}
\providecommand{\href}[2]{#2}
\begin{thebibliography}{10}

\bibitem{BS3}
L.~Barreira and B.~Saussol, \emph{Multifractal analysis of hyperbolic flows},
  Comm. Math. Phys. \textbf{214} (2000), no.~2, 339--371.

\bibitem{BS2}
L.~Barreira and J.~Schmeling, \emph{Invariant sets with zero measure and full
  hausdorff dimension}, Electron. Res. Announc. Amer. Math. Soc. \textbf{3}
  (1997), 114--118.

\bibitem{BS}
\bysame, \emph{Sets of ``non-typical'' points have full topological entropy and
  full hausdorff dimension}, Israel J. Math. \textbf{116} (2000), 29--70.

\bibitem{Bl}
A.~M. Blokh, \emph{Decomposition of dynamical systems on an interval}, Uspekhi
  Mat. Nauk. \textbf{38} (1983), no.~5(233), 179--180.

\bibitem{Bo4}
R.~Bowen, \emph{Periodic orbits for hyperbolic flows}, American J. Math.
  \textbf{94} (1972), 1--30.

\bibitem{Bu}
J.~Buzzi, \emph{Specification on the interval}, Trans. Amer. Math. Soc.
  \textbf{349} (1997), no.~7, 2737--2754.

\bibitem{EKW}
A.~Eizenberg, Y.~Kifer, and B.~Weiss, \emph{Large deviations for
  $z^d$-actions}, Comm. Math. Phys. \textbf{164} (1994), no.~3, 433--454.

\bibitem{EKL}
C.~Ercai, T.~K\"upper, and S.~Lin, \emph{Topological entropy for divergence
  points}, Ergodic Theory Dynam. Systems \textbf{25} (2005), no.~4, 1173--1208.

\bibitem{Ja}
M.V. Jakobson, \emph{Absolutely continuous invariant measures for one-parameter
  families of one-dimensional maps}, Comm. Math. Phys. \textbf{81} (1981),
  no.~1, 39--88.

\bibitem{K}
A.~Katok, \emph{Lyapunov exponents, entropy and periodic orbits for
  diffeomorphisms}, Inst. Hautes \'Etudes Sci. Publ. Math. (1980), no.~51,
  137--173.

\bibitem{KH}
A.~Katok and B.~Hasselblatt, \emph{Introduction to the modern theory of
  dynamical systems}, Encyclopedia of Mathematics and its Applications,
  vol.~54, Cambridge University Press, 1995.

\bibitem{Li}
D.~Lind, \emph{Ergodic group automorphims and specification}, Lecture Notes in
  Mathematics, vol. 729, Springer, 1979, pp.~93--104.

\bibitem{Me}
L.~Mendoza, \emph{Ergodic attractors for diffeomorphisms of surfaces}, J.
  London Math. Soc. \textbf{37} (1988), no.~2, 362--374.

\bibitem{PP}
W.~Parry and M.~Pollicott, \emph{Zeta functions and the periodic orbit
  structure of hyperbolic dynamics}, Ast\'erisque, no. 187-188, Soc. Math.
  France, 1990.

\bibitem{Pe}
Y.B. Pesin, \emph{Dimension theory in dimensional systems : Contemporary views
  and applications}, University of Chicago Press, Chicago, 1997.

\bibitem{PP2}
Y.B. Pesin and B.S. Pitskel, \emph{Topological pressure and the variational
  principle for non-compact sets (english translation)}, Funct. Anal. Appl.
  \textbf{18} (1984), 307--318.

\bibitem{TV2}
F.~Takens and E.~Verbitskiy, \emph{Multifractal analysis of local entropies for
  expansive homeomorphisms with specification}, Comm. Math. Phys. \textbf{203}
  (1999), no.~3, 593--612.

\bibitem{TV}
\bysame, \emph{On the variational principle for the topological entropy of
  certain non-compact sets}, Ergodic Theory Dynam. Systems \textbf{23} (2003),
  no.~1, 317--348.

\bibitem{Wa}
P.~Walters, \emph{An introduction to ergodic theory (graduate texts in
  mathematics 79)}, Springer, New York, 1982.

\bibitem{Yo}
L.S. Young, \emph{Large deviations in dynamical systems}, Trans. Amer. Math.
  Soc. \textbf{318} (1990), no.~2, 525--543.

\end{thebibliography}

\end{document}